\newtheorem{superclass}{superclass}
\newtheorem{definition}[superclass]{Definition}
\newtheorem{theorem}[superclass]{Theorem}
\newtheorem{proposition}[superclass]{Proposition}
\newcommand{\mc}{\mathcal}
\newcommand{\R}{\mathbbm{R}}
\newcommand{\N}{\mathbbm{N}}
\newcommand{\B}{\mathbb{B}}
\newcommand{\borel}{\mc{B}}
\renewcommand{\S}{\mathbb{S}}
\newcommand{\vertiii}[1]{{\left\vert\kern-0.25ex\left\vert\kern-0.25ex\left\vert #1 
    \right\vert\kern-0.25ex\right\vert\kern-0.25ex\right\vert}}
\DeclareMathOperator{\dist}{dist}
\DeclareMathOperator{\proj}{proj}
\begin{document}

\title{A reinterpretation of set differential equations as differential equations in a Banach space}
\author{Martin Rasmussen, Janosch Rieger\footnote{Supported by a Marie Curie Fellowship of the European Union.} and Kevin Webster\\
\small{Department of Mathematics, Imperial College London}}
\date{\today}
\maketitle

\begin{abstract}
Set differential equations are usually formulated in terms of the Hukuhara differential,
which implies heavy restrictions for the nature of a solution.
We propose to reformulate set differential equations as ordinary differential 
equations in a Banach space by identifying the convex and compact subsets of $\R^d$ with 
their support functions.
Using this representation, we demonstrate how existence and uniqueness results can be 
applied to set differential equations.
We provide a simple example, which can be treated in support function representation,
but not in the Hukuhara setting.
\end{abstract}

\noindent {\bf Key words:} set differential equations, ordinary differential equations in Banach spaces,
existence and uniqueness.\\
{\bf AMS subject classification:} 58D25, 34G20, 26E25.

\section{Introduction}

A set differential equation is an equation of the form
\begin{equation} \label{Hukuhara:SDE}
D_H A(t)=f(t,A(t)),\quad A(0)=A_0,
\end{equation}
where $t\mapsto A(t)$ is a curve in the space $\mc{K}_c(\R^d)$ of nonempty convex and compact subsets of $\R^d$,
the right-hand side is a mapping
\[f:[0,T]\times\mc{K}_c(\R^d)\to\mc{K}_c(\R^d),\]
and $D_H A(t)$ is the so-called Hukuhara differential of the curve at $t\in(0,T)$.
Set differential equations have been investigated in a considerable number of papers.
For an overview of the literature we refer to \cite{Lakshmikantham}.
The usage of the Hukuhara differential in \eqref{Hukuhara:SDE} implies heavy restrictions for the nature of a solution, 
which can, e.g., only grow in diameter, but not shrink, see \cite[Proposition 1.6.1]{Lakshmikantham}.

Recently, there have been attempts to modify the underlying Hukuhara difference with the 
aim to allow for a more flexible behavior of solution curves, see \cite{Malinowski:12a,Malinowski:12}
and the references therein. 
The resulting differential is called the second type Hukuhara differential.
In this setting, solution curves of \eqref{Hukuhara:SDE} can shrink, but not grow.
There exist, however, curves in $\mc{K}_c(\R^d)$ with $d\ge2$, which expand in some space directions and contract in others simultaneously.
Both Hukuhara-based approaches fail to capture this behavior.

\medskip

The Hukuhara differential is not the only approach to handle set evolutions.
In particular, we would like to mention an abstract framework named \emph{Mutational Analysis},
which has been presented in \cite{Aubin:99} and further developed in \cite{Lorenz:10}.
It generalizes evolution equations from vector spaces to metric spaces and can not only 
handle evolutions in $\mc{K}_c(\R^d)$, but also in spaces of more general sets such as
the compact subsets of $\R^d$.

\medskip

The aim of the present paper is to show that a large family of evolutions in $\mc{K}_c(\R^d)$, 
containing the problems investigated in \cite{Lakshmikantham} and \cite{Malinowski:12a,Malinowski:12},
can be written and treated as ordinary differential equations in a Banach space with the usual Frechet derivative in time.
We do not apply the apparatus from \cite{Aubin:99} and \cite{Lorenz:10}, but obtain very satisfactory results
by exploiting the intrinsic features of of $\mc{K}_c(\R^d)$.

Identifying convex sets with their support functions yields an embedding of the space $\mc{K}_c(\R^d)$ into the 
Banach space $C(\S^{d-1})$ of continuous real-valued functions on the sphere, see \cite{Hormander:55}. 
As it is well-known that any Hukuhara differentiable curve is Frechet differentiable in support function representation,
see \cite[Lemma 4.1]{Banks:Jacobs:70}, it seems natural to consider set differential equations in support function representation
\begin{equation} \label{support:ODE}
\tfrac{d}{dt}\sigma_{A(t)} = f(t,\sigma_{A(t)}),\quad \sigma_{A(0)}=\sigma_{A_0},
\end{equation}
where $t\mapsto A(t)$ is a curve in $\mc{K}_c(\R^d)$, $t\mapsto\sigma_{A(t)}$ is a curve in $C(\S^{d-1})$,
and $\tfrac{d}{dt}\sigma_{A(t)}$ is the Frechet differential of the curve at $t\in(0,T)$.

There are some technical difficulties when standard results on ordinary differential equations 
are applied to equations of type \eqref{support:ODE}.
As we have to guarantee that solutions stay in the manifold $\Sigma\subset C(\S^{d-1})$ of all support functions 
associated with sets from $\mc{K}_c(\R^d)$, we have to understand the structure of the tangent 
cone $T_\Sigma(\sigma)$ to $\Sigma$ at any $\sigma\in\Sigma$.
To transfer existence and uniqueness theorems for ordinary differential equations in Banach spaces 
with non-Lipschitz right-hand side to \eqref{support:ODE}, we need compactness properties 
of $\Sigma$ and a characterization of the semi-inner product on $(C(\S^{d-1}),\|\cdot\|_\infty)$.
Some of these preliminary results can be taken from the literature, others are developed
in the present paper.
In particular, we give a geometric interpretation of the one-sided Lipschitz condition in $\mc{K}_c(\R^d)$,
which is a surprisingly mild condition on the behavior of $f$.

\medskip

The organization of the paper is as follows. 
In Section \ref{sec:prel}, we collect basic definitions and the preliminary results mentioned above,
which we use in Section \ref{sec:ex:uni} to transfer standard existence and uniqueness results to \eqref{support:ODE}.
In Section \ref{sec:second}, we briefly show that second-type Hukuhara differentiable curves are a special case of \eqref{support:ODE}.
The example discussed in Section \ref{counterexample} illustrates that both Hukuhara approaches fail to 
capture very simple dynamics in $\mc{K}_c(\R^2)$, while the support function calculus is applicable
and yields reasonable solutions.

\section{Preliminaries} \label{sec:prel}

After introducing basic notation in Section \ref{sub:definitions}, we will collect some known results 
about support functions and tangent cones in Sections \ref{sub:support} and \ref{sub:tangent}. 
Section \ref{sub:duality} investigates duality concepts, which are ingredients for standard results 
on ordinary differential equations in Banach spaces, in the particular case of set differential equations.

\subsection{Basic definitions} \label{sub:definitions}

Let $\R_0^+$ be the set of all nonnegative real numbers.
Throughout this paper, $\S^{d-1}\subset\R^d$ will denote the sphere w.r.t.\ the Euclidean norm 
$\|\cdot\|:\R^d\to\R_0^+$, and the modulus will be denoted $|\cdot|:\R\to\R_0^+$.
Let $C(\S^{d-1})$ be the space of continuous real-valued functions on $\S^{d-1}$ equipped with 
the maximum norm $\|\cdot\|_\infty:C(\S^{d-1})\to\R_0^+$.
If $(X,\|\cdot\|_X)$ is a normed space, $x\in X$ and $r>0$, then 
\[\B_r(x):=\{x'\in X:\|x'-x\|\le r\}\]
is the closed ball of radius $r$ centered at $x$.

The nonempty compact subsets of $\R^d$ will be denoted $\mc{K}(\R^d)$,
and $\mc{K}_c(\R^d)$ will stand for the nonempty convex and compact subsets of $\R^d$.
For any $\lambda\in\R$ and $A,B\in\mc{K}(\R^d)$, let
\[A+B:=\{a+b:a\in A,\ b\in B\}\quad\text{and}\quad\lambda A:=\{\lambda a:a\in A\}\]
denote Minkowski addition and multiplication.
For any $A,B\in\mc{K}_c(\R^d)$, let
\begin{align*}
\dist(A,B)&:=\sup_{a\in A}\inf_{b\in B}\|a-b\|,\\
\dist_H(A,B)&:=\max\{\dist(A,B),\dist(B,A)\}
\end{align*}
denote the one-sided and the symmetric Hausdorff distance.
For $a,b\in\R^d$, we write $\dist(a,B)$ and $\dist(A,b)$
instead of $\dist(\{a\},B)$ and $\dist(A,\{b\})$.
The projection of a point $a\in\R^d$ to a set $B\in\mc{K}(\R^d)$ is the nonempty set
\[\proj_B(a):=\{b\in B:\|a-b\|=\dist(a,B)\}.\]
When $B\in\mc{K}_c(\R^d)$, then $a\mapsto\proj_B(a)$ is a single-valued mapping, 
see \cite[Lemma 7.3]{Clarke:13}, and it follows from \cite[Proposition 7.4]{Clarke:13} 
that this mapping is $1$-Lipschitz.

We associate convex and compact subsets $A\in\mc{K}_c(\R^d)$ with their 
support functions
\[\sigma_A:\S^{d-1}\to\R,\quad \sigma_A(p):=\sup_{a\in A}\langle p,a\rangle.\]
Sometimes, it is useful to consider their positive homogeneous extensions
\[\bar{\sigma}_A:\R^d\to\R,\quad \sigma_A(p):=\sup_{a\in A}\langle p,a\rangle,\]
which obviously coincide with $\sigma_A(\cdot)$ on $\S^{d-1}$.
We define
\[\Sigma(\R^d):=\{\sigma_A:A\in\mc{K}_c(\R^d)\}\]
to be the set of all support functions of convex and compact subsets of $\R^d$,
and we set
\[\hat\Sigma(\R^d):=\Sigma(\R^d)-\Sigma(\R^d)=\{\sigma_A-\sigma_B:A,B\in\mc{K}_c(\R^d)\}.\]

\subsection{Elementary facts about support functions} \label{sub:support}

The following proposition is Corollary 13.2.2 from \cite{Rockafellar:70}.
\begin{proposition} \label{pos:hom}
A bounded function $\sigma:\S^{d-1}\to\R$ is a support function of some $A\in\mc{K}_c(\R^d)$ if 
and only if its positive homogeneous extension $\bar\sigma:\R^d\to\R$ is convex.
\end{proposition}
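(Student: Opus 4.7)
The plan is to prove the two implications separately. The forward direction—support function implies convex extension—is routine: if $\sigma = \sigma_A$ for some $A\in\mc{K}_c(\R^d)$, then the positively homogeneous extension satisfies $\bar\sigma(x) = \sup_{a\in A}\langle x,a\rangle$ for all $x\in\R^d$, and this is a pointwise supremum of linear maps, hence convex. I would dispose of this direction in one or two lines.

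For the converse, assuming $\bar\sigma$ is convex and noting it is positively homogeneous by construction, $\bar\sigma$ is sublinear. My plan is to introduce the natural candidate
\[A := \{a\in\R^d : \langle p,a\rangle \le \bar\sigma(p) \text{ for all } p\in\R^d\},\]
an intersection of closed half-spaces, which is therefore closed and convex. Boundedness of $\sigma$ on $\S^{d-1}$ translates directly into boundedness of $A$: if $|\sigma|\le M$ on $\S^{d-1}$ and $0\ne a\in A$, then the choice $p := a/\|a\|$ yields $\|a\| = \langle p,a\rangle \le \bar\sigma(p) \le M$. Hence $A$ is compact, and it remains to verify $\sigma_A = \sigma$ on $\S^{d-1}$; nonemptiness of $A$ will fall out as a by-product of the nontrivial inequality.

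The inequality $\sigma_A \le \sigma$ on $\S^{d-1}$ is built into the definition of $A$. The reverse inequality is the main obstacle, and I would handle it via Hahn-Banach. Given $p_0\in\S^{d-1}$, the linear functional $\ell(tp_0) := t\sigma(p_0)$ on $\R p_0$ is dominated by $\bar\sigma$: the case $t\ge 0$ uses positive homogeneity, while for $t<0$ sublinearity gives $0 = \bar\sigma(0) \le \bar\sigma(tp_0) + \bar\sigma(-tp_0) = \bar\sigma(tp_0) - t\sigma(p_0)$. By Hahn-Banach, $\ell$ extends to a linear functional on $\R^d$ still dominated by $\bar\sigma$, and any such functional is represented by some $a\in\R^d$ via $\langle\cdot,a\rangle$. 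The domination forces $a\in A$, while agreement with $\ell$ at $p_0$ gives $\langle p_0,a\rangle = \sigma(p_0)$. This yields $\sigma_A(p_0) \ge \sigma(p_0)$ and closes the argument. The delicate ingredient is the domination check that unlocks Hahn-Banach; once that is in place the rest is bookkeeping.
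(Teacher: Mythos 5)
Your proof is correct. Note first that the paper does not prove this proposition at all: it is quoted verbatim as Corollary 13.2.2 of Rockafellar's \emph{Convex Analysis}, so any comparison is really with Rockafellar's argument rather than with the paper. Rockafellar reaches the result through the conjugacy machinery (Theorem 13.2): support functions of nonempty closed convex sets are exactly the closed, positively homogeneous, proper convex functions, obtained by identifying $\sigma_A$ with the conjugate of the indicator function of $A$. Your route is more elementary and self-contained: you take the candidate set $A=\{a:\langle p,a\rangle\le\bar\sigma(p)\ \forall p\}$, get closedness and convexity for free as an intersection of half-spaces, extract boundedness from the bound on $\sigma$ over $\S^{d-1}$ by testing with $p=a/\|a\|$, and recover $\sigma_A\ge\sigma$ pointwise by a Hahn--Banach extension of the one-dimensional functional $tp_0\mapsto t\sigma(p_0)$ dominated by the sublinear function $\bar\sigma$; the domination check for $t<0$ via $0=\bar\sigma(0)\le\bar\sigma(tp_0)+\bar\sigma(-tp_0)$ is exactly the right computation, and nonemptiness of $A$ does indeed fall out of the extension. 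Two small remarks: in $\R^d$ the Hahn--Banach step can be replaced by the existence of a subgradient of the finite convex function $\bar\sigma$ at $p_0$ (which lands in $A$ by sublinearity and touches $\bar\sigma$ at $p_0$), avoiding any appeal to extension theorems; and your argument, unlike a naive reading of the statement, never needs continuity of $\sigma$ as a hypothesis --- consistent with the paper's subsequent observation that convexity of the finite-valued $\bar\sigma$ already yields continuity. What your approach buys is a short, fully elementary proof; what the conjugacy approach buys is that it situates the result in the general duality between closed convex sets and closed sublinear functions.
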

Recall that every convex function $\bar\sigma:\R^d\to\R$ is continuous (see \cite[Theorem 10.1]{Rockafellar:70}).
We may therefore interpret the set $\Sigma(\R^d)$ of all support functions as a subset of $ C(\S^{d-1})$.

The following facts are well-known (see \cite{Hormander:55,Hu:Papageorgiou:97}).
\begin{proposition} \label{elementary:support}
If $A,B\in\mc{K}_c(\R^d)$ and $\lambda\ge 0$, then
\begin{itemize}
\item [a)] $\sigma_{A+B}=\sigma_A+\sigma_B$ and $\sigma_{\lambda A}=\lambda\sigma_A$,
\item [b)] $\dist(A,B)=\max_{p\in\B_1(0)}\big(\bar{\sigma}_A(p)-\bar\sigma_B(p)\big)$,
\item [c)] $\dist_H(A,B)=\max_{p\in\S^{d-1}}|\sigma_A(p)-\sigma_B(p)|$.
\end{itemize}
In particular, $\Sigma(\R^d)$ is a convex subcone of $C(\S^{d-1})$.
\end{proposition}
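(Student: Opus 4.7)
The plan is to prove (a) by unwinding the definition of $\sigma_A$, to deduce (b) from the duality between the distance to a convex compact set and its separating linear functionals, to derive (c) from (b) via positive homogeneity, and to read off the subcone statement as a direct corollary of (a).

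For (a), I would note that $\sigma_{A+B}(p) = \sup_{a\in A,\,b\in B}\langle p, a+b\rangle$ splits over the two independent variables to give $\sigma_A(p) + \sigma_B(p)$, and that $\sigma_{\lambda A}(p) = \lambda\sigma_A(p)$ follows by pulling the nonnegative scalar $\lambda$ out of the supremum. For (b), I would prove the two inequalities separately. The upper bound is immediate: for $p \in \B_1(0)$, $a \in A$ and any $b \in B$,
\[\langle p, a\rangle - \bar\sigma_B(p) \le \langle p, a - b\rangle \le \|a - b\|,\]
and minimising in $b$ then maximising in $a$ yields $\bar\sigma_A(p) - \bar\sigma_B(p) \le \dist(A, B)$. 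The lower bound is where the geometry enters: for $a \in A \setminus B$, the projection $b^\ast = \proj_B(a)$ satisfies the variational inequality $\langle a - b^\ast, b - b^\ast\rangle \le 0$ for all $b \in B$, so $p := (a - b^\ast)/\|a - b^\ast\|$ is a unit vector with $\sigma_B(p) = \langle p, b^\ast\rangle$ and hence $\langle p, a\rangle - \sigma_B(p) = \|a - b^\ast\| = \dist(a, B)$. Combining with $\bar\sigma_A(p) \ge \langle p, a\rangle$ and taking the supremum over $a \in A$ gives the reverse inequality (the case $a \in B$ being trivially covered by $p = 0$).

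For (c), the positive homogeneity of $\bar\sigma_A - \bar\sigma_B$ together with the value $0$ attained at the origin reduces the maximum over $\B_1(0)$ in (b) to the positive part of the maximum over $\S^{d-1}$:
\[\dist(A, B) = \max\Bigl(0,\ \max_{p \in \S^{d-1}}(\sigma_A(p) - \sigma_B(p))\Bigr).\]
The symmetric identity holds for $\dist(B, A)$, and since the two spherical maxima sum to a nonnegative number, at least one of them is already $\ge 0$, so $\dist_H(A, B)$ equals the larger of them, which coincides with $\max_{p \in \S^{d-1}} |\sigma_A(p) - \sigma_B(p)|$. Finally, the subcone property follows immediately from (a): for any $A, B \in \mc{K}_c(\R^d)$ and $\lambda \ge 0$, the sets $A + B$ and $\lambda A$ lie in $\mc{K}_c(\R^d)$, so $\sigma_A + \sigma_B$ and $\lambda \sigma_A$ lie in $\Sigma(\R^d)$, and convex combinations are handled by combining the two operations. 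The main obstacle is the lower bound in (b), since it is the only step that genuinely uses the convexity of $B$, through the single-valued projection cited in Section~\ref{sub:definitions}.
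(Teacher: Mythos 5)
Your proof is correct. Note, however, that the paper offers no proof of this proposition at all: it is stated as ``well-known'' with a pointer to H\"ormander and Hu--Papageorgiou, so there is nothing to compare step by step. Your self-contained argument is sound and follows the standard route: (a) is the splitting of a supremum over independent variables; the upper bound in (b) is Cauchy--Schwarz plus a $\min$-$\max$ interchange in the harmless direction; the lower bound correctly isolates the one place where convexity of $B$ is used, via the variational inequality for the metric projection (which is \eqref{min:vip} in the paper, and which also gives $\sigma_B(p)=\langle p,b^*\rangle$, the step that makes the normal direction $p=(a-b^*)/\|a-b^*\|$ realize $\dist(a,B)$); and the reduction in (c) from the ball to the sphere via positive homogeneity, together with the observation that the two spherical maxima cannot both be negative, is exactly right. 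Two very minor points you leave implicit: the maximum over $\B_1(0)$ in (b) is attained because $\bar\sigma_A-\bar\sigma_B$ is continuous (convex functions on $\R^d$ are continuous, as the paper recalls after Proposition \ref{pos:hom}) and the ball is compact; and for the convexity of $\Sigma(\R^d)$ one should say explicitly that $\lambda\sigma_A+(1-\lambda)\sigma_B=\sigma_{\lambda A+(1-\lambda)B}$ by combining the two identities in (a). Neither is a gap.
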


The cone $\Sigma(\R^d)$ is locally compact.
\begin{proposition} \label{locally:compact}
The cone $\Sigma(\R^d)$ is closed as a subset of $C(\S^{d-1})$, and for any $\sigma\in\Sigma(\R^d)$ and $r>0$, 
the intersection $\Sigma(\R^d)\cap\B_r(\sigma)\subset C(\S^{d-1})$ is compact w.r.t.\ the maximum norm.
\end{proposition}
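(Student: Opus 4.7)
The plan splits naturally into two parts, matching the two assertions.

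For closedness, I would take a sequence $(\sigma_n)\subset\Sigma(\R^d)$ with $\sigma_n\to\sigma$ in $C(\S^{d-1})$ and show $\sigma\in\Sigma(\R^d)$ via Proposition \ref{pos:hom}. Since uniform convergence on $\S^{d-1}$ implies convergence of the positive homogeneous extensions $\bar\sigma_n(p)=\|p\|\sigma_n(p/\|p\|)$ to $\bar\sigma(p)=\|p\|\sigma(p/\|p\|)$ for every $p\in\R^d\setminus\{0\}$ (and trivially at $0$), and since each $\bar\sigma_n$ is convex by Proposition \ref{pos:hom}, the pointwise limit $\bar\sigma$ is convex as well. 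Continuity of $\sigma$ on the compact sphere gives boundedness, so Proposition \ref{pos:hom} yields $\sigma\in\Sigma(\R^d)$.

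For compactness of $\Sigma(\R^d)\cap\B_r(\sigma)$, I would invoke the Arzelà--Ascoli theorem. Uniform boundedness is immediate, since any $\tau$ in the intersection satisfies $\|\tau\|_\infty\le\|\sigma\|_\infty+r$. The main task is equicontinuity, for which I would exploit that each $\tau=\sigma_A\in\Sigma(\R^d)$ is Lipschitz on $\S^{d-1}$ with a constant controlled by $\|\tau\|_\infty$. Concretely, for $p,q\in\S^{d-1}$,
\[
\sigma_A(p)-\sigma_A(q)\le\sup_{a\in A}\langle p-q,a\rangle\le\bigl(\sup_{a\in A}\|a\|\bigr)\,\|p-q\|,
\]
and choosing $a_0\in\argmax_{a\in A}\|a\|$ gives $\sigma_A(a_0/\|a_0\|)\ge\|a_0\|$, so $\sup_{a\in A}\|a\|\le\|\sigma_A\|_\infty\le\|\sigma\|_\infty+r$. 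Hence every element of $\Sigma(\R^d)\cap\B_r(\sigma)$ is $L$-Lipschitz on $\S^{d-1}$ with a common constant $L=\|\sigma\|_\infty+r$, which is equicontinuity. Arzelà--Ascoli then yields relative compactness in $C(\S^{d-1})$, and combining with the first part (which makes $\Sigma(\R^d)\cap\B_r(\sigma)$ closed) gives compactness.

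The mildly subtle point is the uniform Lipschitz estimate: at first glance one might fear that $\sup_{a\in A}\|a\|$ could be much larger than $\|\sigma_A\|_\infty$, but the elementary inequality above shows this is not the case, and it is exactly what converts the norm bound coming from the ball $\B_r(\sigma)$ into the equicontinuity needed for Arzelà--Ascoli. Everything else is routine.
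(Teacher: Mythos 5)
Your proof is correct. The closedness argument is essentially the same as the paper's: pass convexity of the positive homogeneous extensions to the uniform (hence pointwise) limit and invoke Proposition \ref{pos:hom}. For compactness, however, you take a genuinely different route. The paper simply cites Blaschke's selection theorem to get compactness of $\Sigma(\R^d)\cap\B_{\|\sigma\|_\infty+r}(0)$ and then observes that $\Sigma(\R^d)\cap\B_r(\sigma)$ is a closed subset of it. You instead work entirely inside $C(\S^{d-1})$ and apply Arzel\`a--Ascoli, the key point being your uniform Lipschitz estimate: the sublinearity bound $\sigma_A(p)-\sigma_A(q)\le\sup_{a\in A}\|a\|\,\|p-q\|$ together with the observation $\sup_{a\in A}\|a\|\le\|\sigma_A\|_\infty\le\|\sigma\|_\infty+r$ (which is exactly right, including the degenerate case $A=\{0\}$) gives a common Lipschitz constant and hence equicontinuity. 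The paper's version is shorter but leans on an external classical theorem about convex bodies; yours is self-contained in the function-space setting, makes explicit a useful quantitative fact (support functions are Lipschitz on the sphere with constant equal to their sup norm), and in effect reproves the relevant case of Blaschke's theorem via support functions. Both are complete proofs.
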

\begin{proof}
Let $\sigma\in C(\S^{d-1})$, and let $(\sigma_n)_{n\in\N}\subset\Sigma(\R^d)$ be a sequence of support functions with 
$\|\sigma_n-\sigma\|_\infty\to 0$ as $n\to\infty$.
By Proposition \ref{pos:hom}, the extensions $\bar{\sigma}_n$ are convex.
Hence, we have for any $\lambda\in[0,1]$ and $x,y\in\R^d$ that
\begin{align*}
\bar{\sigma}(\lambda x+(1-\lambda)y) \leftarrow& \bar{\sigma}_n(\lambda x+(1-\lambda)y)\\ 
\le& \lambda\bar{\sigma}_n(x)+(1-\lambda)\bar{\sigma}(y) \to \lambda\bar{\sigma}(x)+(1-\lambda)\bar{\sigma}(y)
\end{align*}
as $n\to\infty$, so that $\bar\sigma$ is convex.
Therefore, Prosition \ref{pos:hom} implies that $\sigma\in\Sigma(\R^d)$.

By Blaschke's selection theorem, see \cite[Chapter 4]{Eggleston:58}, the set $\Sigma(\R^d)\cap\B_{\|\sigma\|_\infty+r}(0)$
is compact. As $\Sigma(\R^d)\cap\B_r(\sigma)$ is the intersection of two closed sets, it is a closed subset 
of the compact set $\Sigma(\R^d)\cap\B_{\|\sigma\|_\infty+r}(0)$, and hence compact.
\end{proof}

\subsection{Tangent cones} \label{sub:tangent}

We are interested in $C(\S^{d-1})$-valued solutions of differential equations that do not leave $\Sigma(\R^d)$.
The concept of tangency is central for existence theorems under state constraints.
\begin{definition}
Let $X$ be a normed space, $K\subset X$ a set and $x\in\overline{K}$. 
Then the tangent cone to $K$ at $x$ is given by
\[T_K(x):=\{v\in X: \liminf_{h\searrow 0}h^{-1}\dist(x+hv,K)=0\}.\]
\end{definition}

The following proposition is Lemma 4.2.5 in \cite{Aubin:Frankowska:90}.
It will later be used to characterize tangency to the convex cone $\Sigma(\R^d)$.
\begin{proposition} \label{tangent:to:cone}
If $X$ is a normed space and $K\subset X$ is a convex cone, then $T_K(x)=\overline{K+\R x}$ for all $x\in K$.
\end{proposition}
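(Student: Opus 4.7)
The plan is to prove the two inclusions $K+\R x\subseteq T_K(x)$ and $T_K(x)\subseteq\overline{K+\R x}$ separately, then invoke closedness of $T_K(x)$ to upgrade the first inclusion to its closure.

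For $K+\R x\subseteq T_K(x)$, I would take an arbitrary $v=k+\lambda x$ with $k\in K$ and $\lambda\in\R$, and rewrite
\[x+hv=(1+h\lambda)x+hk\quad\text{for } h>0.\]
For $h$ small enough that $1+h\lambda\ge 0$, both summands lie in the convex cone $K$, since $K$ is a cone containing both $x$ and $k$. Their sum lies in $K$ as well, because a convex cone is closed under addition: $a+b=2\cdot\tfrac12(a+b)\in K$ by convexity and the cone property. Hence $\dist(x+hv,K)=0$ for all sufficiently small $h>0$, which gives $v\in T_K(x)$. Taking closures and invoking closedness of $T_K(x)$ yields $\overline{K+\R x}\subseteq T_K(x)$.

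For the reverse inclusion, take $v\in T_K(x)$. By definition there are sequences $h_n\searrow 0$ and $k_n\in K$ with
\[h_n^{-1}\|x+h_nv-k_n\|\to 0.\]
Rearranging gives $v-\bigl(h_n^{-1}k_n-h_n^{-1}x\bigr)\to 0$ as $n\to\infty$, and each element $h_n^{-1}k_n-h_n^{-1}x$ lies in $K+\R x$ because $h_n^{-1}k_n\in K$ by the cone property of $K$ and $-h_n^{-1}x\in\R x$. Therefore $v$ is a limit of elements of $K+\R x$, i.e.\ $v\in\overline{K+\R x}$.

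The one side lemma needed is closedness of $T_K(x)$, which follows from a standard diagonal argument: if $v_n\to v$ with $v_n\in T_K(x)$, pick $h_n\in(0,1/n)$ with $h_n^{-1}\dist(x+h_nv_n,K)<1/n$, and observe that $h_n^{-1}\dist(x+h_nv,K)\le h_n^{-1}\dist(x+h_nv_n,K)+\|v-v_n\|\to 0$. The only real care in the main proof is controlling the sign of $1+h\lambda$ in the first inclusion; once the conic and convex structure are used, no genuine obstacle remains, which is why the statement is a short lemma rather than a deep result.
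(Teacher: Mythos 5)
Your proof is correct. Note that the paper itself does not prove this proposition at all --- it is quoted verbatim as Lemma 4.2.5 of Aubin--Frankowska --- so there is no in-paper argument to compare against; what you have written is the standard self-contained proof of that lemma: the decomposition $x+hv=(1+h\lambda)x+hk$ together with closedness of a convex cone under addition for one inclusion, closedness of the tangent cone for the upgrade to the closure, and the rescaling $h_n^{-1}k_n-h_n^{-1}x\in K+\R x$ for the reverse inclusion. Two cosmetic points: in the forward inclusion you want $1+h\lambda>0$ strictly for small $h$ (which you automatically get), since $0\in K$ is not guaranteed by the cone axioms; and in the reverse inclusion the infimum defining $\dist(x+h_nv,K)$ need not be attained in a general normed space, so $k_n$ should be chosen as an $h_n/n$-approximate minimizer --- this changes nothing in the limit. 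Neither point is a gap.
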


\subsection{The semi-inner product for support functions} \label{sub:duality}

In Section \ref{sec:ex:uni}, we will apply a uniqueness theorem for ordinary differential equations 
in Banach spaces to set differential equations in support function representation.
Its main ingredient is a one-sided Lipschitz condition, which is given in terms of a so-called semi-inner product.
Therefore, we investigate in the present paragraph how this product acts on $\hat\Sigma(\R^d)\subset C(\S^{d-1})$
and what this action means for the corresponding elements of $\mc{K}_c(\R^d)$.

\begin{definition} \label{duality:map}
For any Banach space $X$ with dual space $X^*$, the duality map $J:X\rightrightarrows X^*$ 
is given by
\[J(x)=\{x^*\in X^*: x^*(x)=\|x\|_X^2=\|x^*\|_{X^*}^2\}.\]
The mapping $\langle\cdot,\cdot\rangle_- : X\times X\rightarrow\mathbb{R}$ defined by
\[\langle x, y \rangle_- = \inf\{y^*(x)\, :\, y^* \in J(y)\}\]
is called a semi-inner product.
\end{definition}

Consider the Banach space $X=C(M)$, where $M$ is a compact metric space and $C(M)$ denotes the space of all continuous real-valued 
functions on $M$ equipped with the maximum norm.
Let $\borel(M)$ denote space of all signed Borel measures on $M$, 
and let $\borel(M)^+$ denote space of all positive Borel measures on $M$. 

\begin{proposition}[Jordan decomposition] \label{Jordan:decomposition}
For any $\mu\in\borel(M)$, there exists a unique pair $(\mu_P,\mu_N)\in\borel^+(M)\times\borel^+(M)$
supported on Borel sets $P,N\subset M$ such that $\mu = \mu_P-\mu_N$ and $M$ is the disjoint union 
of $P$ and $N$.
\end{proposition}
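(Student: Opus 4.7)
The plan is to reduce the proposition to the Hahn decomposition theorem for signed Borel measures: for every $\mu\in\borel(M)$ there is a partition $M=P\sqcup N$ into Borel sets such that $\mu(E)\ge 0$ for every Borel $E\subset P$ and $\mu(E)\le 0$ for every Borel $E\subset N$. Granting this, I would define
\[
\mu_P(E):=\mu(E\cap P),\qquad \mu_N(E):=-\mu(E\cap N),
\]
which are positive Borel measures supported on $P$ and $N$ respectively, and countable additivity of $\mu$ immediately yields $\mu=\mu_P-\mu_N$, giving existence.

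To establish the Hahn decomposition itself, I would work with the extremal problem
\[
\alpha:=\sup\{\mu(A):A\in\borel(M)\},
\]
which is finite because $\mu$ is a finite signed measure, select a sequence $A_n\in\borel(M)$ with $\mu(A_n)\to\alpha$, and assemble a maximiser $P$ from the $A_n$ by an exhaustion argument. The crucial verification is that any Borel subset $E\subset P$ with $\mu(E)<0$ could be removed from $P$ to produce a set of strictly larger $\mu$-measure, contradicting maximality; an analogous argument shows that no Borel subset of $N:=M\setminus P$ can have strictly positive measure. This maximisation and its verification constitute the main technical obstacle.

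For uniqueness, suppose $(\nu_1,\nu_2)$ is another pair with the asserted properties, supported on disjoint Borel sets $P',N'$ with $M=P'\sqcup N'$. The support condition together with $\mu=\nu_1-\nu_2$ forces $P'$ to be a positive and $N'$ to be a negative set for $\mu$. Hence $P\cap N'$ is simultaneously positive and negative, therefore $\mu$-null, and likewise for $P'\cap N$. For any Borel $E\subset M$ this gives
\[
\mu_P(E)=\mu(E\cap P)=\mu(E\cap P\cap P')=\mu(E\cap P')=\nu_1(E),
\]
where the last equality uses that $\nu_1$ is supported on $P'$ and $\nu_2$ on $N'$; an analogous computation yields $\mu_N=\nu_2$. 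Note that the sets $P,N$ themselves are only unique up to $\mu$-null modifications, so the uniqueness statement is to be read as uniqueness of the pair of measures.
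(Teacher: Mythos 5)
Your route is the same as the paper's: the paper gives no proof and instead cites Theorem 10 and Corollary 11 in Section III.4 of Dunford--Schwartz, which are precisely the Hahn decomposition theorem followed by the Jordan decomposition derived from it, exactly as you propose. The existence and uniqueness arguments you give (define $\mu_P(E)=\mu(E\cap P)$, $\mu_N(E)=-\mu(E\cap N)$; show $P\cap N'$ and $P'\cap N$ are $\mu$-null) are the standard ones and are correct, including your remark that only the pair of measures, not the sets $P,N$, is unique. The one thin spot is the step you yourself flag: ``assemble a maximiser $P$ from the $A_n$'' does not work by naively taking unions or a $\limsup$ of sets with $\mu(A_n)\to\alpha$, since $\mu$ is not monotone. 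The standard repair is the auxiliary lemma that every Borel set $A$ with $\mu(A)>0$ contains a \emph{positive} subset $P_A$ with $\mu(P_A)\ge\mu(A)$ (proved by iteratively excising subsets of nearly minimal negative measure and using finiteness of $\mu$); one then takes $P=\bigcup_n P_{A_n}$, which is positive as a countable union of positive sets and attains $\alpha$. With that lemma supplied, your removal argument for maximality goes through and the proof is complete.
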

For a proof, see Theorem 10 and Corollary 11 in Section III.4 of \cite{Dunford:Schwartz:59}.

As a consequence, the total variation of a signed Borel measure is well-defined.
\begin{definition}
The total variation of a Borel measure $\mu\in\borel(\R^d)$ with Jordan decomposition
$\mu_P+\mu_N=\mu$ with associated Borel sets $P\cup N=M$ is defined by
\[\vertiii{\mu}:=\mu_P(P)+\mu_N(N).\]
\end{definition}
It is well-known that the dual space of $(C(M),\|\cdot\|_\infty)$ is $(\borel(M),\vertiii{\cdot})$,
which follows from the Riesz representation theorem, see Theorem IV.6.3 in \cite{Dunford:Schwartz:59}.

We will now characterize the duality map on $C(M)$.
For a given function $f\in C(M)$, we define the sets
\begin{align*} 
E_f^P=\{x\in M: f(x)=\|f\|_\infty\},\quad E_f^N=\{x\in M: f(x)=-\|f\|_\infty\}
\end{align*}
on which $f$ attains its maximal modulus. 
Clearly, $E_f^P\cup E_f^N\neq\emptyset$.
Note that either $E_f^P\cap E_f^N=\emptyset$ or $E_f^P\cap E_f^N=M$, which happens if and only if $f\equiv 0$.

\begin{proposition} \label{dual:representation}
Let $M$ be a compact metric space, %let $X=C(M)$, 
let $f\in C(M)$ and let $\mu\in\borel(M)$. 
Then $\mu\in J(f)$ if and only if 
\begin{align} \label{normalization}
\vertiii{\mu}=\|f\|_\infty
\end{align}
and the Jordan decomposition of $\mu$ satisfies
\begin{align} \label{localization}
\mu_P(M\setminus E_f^P)=0=\mu_N(M\setminus E_f^N).
\end{align}
\end{proposition}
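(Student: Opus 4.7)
By the Riesz representation theorem recalled just before Definition~\ref{duality:map}, the pairing of $\mu\in\borel(M)$ with $f\in C(M)$ is $\int f\,d\mu$ and the operator norm of $\mu$ equals $\vertiii{\mu}$. Condition (\ref{normalization}) is therefore precisely the norm equality $\|\mu\|_{X^*}=\|f\|_X$ appearing in the definition of $J(f)$. Under (\ref{normalization}), the remaining content of the statement $\mu\in J(f)$, namely $\int f\,d\mu=\|f\|_\infty^2$, is equivalent to $\int f\,d\mu=\|f\|_\infty\vertiii{\mu}$. The bulk of the proof is therefore to show that, under (\ref{normalization}), this pairing identity is equivalent to (\ref{localization}).

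The workhorse is the following estimate. For any $\mu\in\borel(M)$ with Jordan decomposition $\mu=\mu_P-\mu_N$ supported on disjoint Borel sets $P,N$ with $P\cup N=M$, the pointwise inequalities $f\le\|f\|_\infty$ and $-f\le\|f\|_\infty$, together with positivity of $\mu_P$ and $\mu_N$, give
\[\int_M f\,d\mu=\int_P f\,d\mu_P-\int_N f\,d\mu_N\le\|f\|_\infty\bigl(\mu_P(P)+\mu_N(N)\bigr)=\|f\|_\infty\vertiii{\mu}.\]

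For the forward implication, if $\mu\in J(f)$ then equality holds in the estimate, forcing $\int_P(\|f\|_\infty-f)\,d\mu_P=0$ and $\int_N(\|f\|_\infty+f)\,d\mu_N=0$. Since both integrands are nonnegative Borel functions, the first identity gives $\mu_P(\{x\in P:f(x)<\|f\|_\infty\})=0$, and because $\mu_P$ is supported on $P$ this reads $\mu_P(M\setminus E_f^P)=0$; an analogous argument handles $\mu_N$. For the converse, (\ref{localization}) localizes both integrals to $E_f^P$ and $E_f^N$, on which $f=\|f\|_\infty$ and $f=-\|f\|_\infty$ respectively, so the estimate becomes an equality and yields $\int f\,d\mu=\|f\|_\infty\vertiii{\mu}$; combined with (\ref{normalization}) this places $\mu$ in $J(f)$.

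I do not expect a genuine obstacle, as the entire argument is just the case of equality in $\int f\,d\mu\le\|f\|_\infty\vertiii{\mu}$. The one point requiring care is the bookkeeping around the Jordan decomposition: each of $\mu_P,\mu_N$ is a positive Borel measure on all of $M$ that happens to be concentrated on $P$ or $N$, so that $\mu_P(M)=\mu_P(P)$ and $\mu_P(M\setminus E_f^P)=\mu_P(P\setminus E_f^P)$, and similarly for $\mu_N$. Keeping these identifications explicit clarifies why (\ref{localization}) is stated as vanishing on complements taken in $M$ rather than in $P$ or $N$, and handles uniformly the degenerate case $f\equiv 0$ in which $\vertiii{\mu}=0$ forces $\mu=0$.
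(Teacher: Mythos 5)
Your proof is correct and follows essentially the same route as the paper: both split $\mu(f)$ via the Jordan decomposition, use the basic estimate $\mu(f)\le\|f\|_\infty\vertiii{\mu}$, and analyze the equality case (the paper phrases the forward direction as a contradiction via a strict inequality, while you extract the same conclusion from the vanishing of the integrals of the nonnegative functions $\|f\|_\infty\mp f$, which is if anything slightly more careful). No gaps.
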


\begin{proof}
Let $\mu\in J(f)$. Then, clearly, \eqref{normalization} holds.
Moreover, if 
\[\mu_P(M\setminus E_f^P)+\mu_N(M\setminus E_f^N)>0,\]
then
\begin{align*}
\mu(f) &= \int_Pfd\mu_P - \int_Nfd\mu_N\\
%&= \int_{E_f^P}fd\mu_P + \int_{P\setminus E_f^P}fd\mu_P - \int_{E_f^N}fd\mu_N - \int_{N\setminus E_f^N}fd\mu_N\\
&< \big(\mu_P(E_f^P) + \mu_P(M\setminus E_f^P) + \mu_N(E_f^N) + \mu_N(M\setminus E_f^N)\big)\|f\|_\infty\\
&= \vertiii{\mu}\|f\|_\infty = \|f\|_\infty^2,
\end{align*}
which contradicts $\mu(f)=\|f\|_\infty^2$. Hence \eqref{localization} holds.

On the other hand, if \eqref{normalization} and \eqref{localization} hold, then
\begin{align*}
\mu(f) &= \int_{E_f^P}fd\mu_P - \int_{E_f^N}fd\mu_N\\
&= \left(\mu_P(E_f^P) + \mu_N(E_f^N)\right)\|f\|_\infty
= \vertiii{\mu}\|f\|_\infty = \|f\|_\infty^2,
\end{align*}
so that $\mu\in J(f)$.
\end{proof}

The following proposition provides an explicit formula for the semi-inner product on $C(M)$.
\begin{proposition} \label{semi:inner:explained}
Let $M$ be a compact metric space %, let $X=C(M)$ 
and let $f,g\in C(M)$. Then 
\begin{align*} %\label{concrete:formula}
\langle f,g\rangle_- &= \|g\|_\infty\min\{\min_{x\in E_g^P}f(x),\min_{x\in E_g^N}-f(x)\}%\\
%&= \|g\|_\infty\min\{\min_{x\in E_g^P}f(x),\min_{x\in E_{-g}^P}-f(x)\}
\end{align*}
with the convention $\min\emptyset=\infty$.
\end{proposition}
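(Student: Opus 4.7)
The plan is to unfold the definition $\langle f,g\rangle_- = \inf\{\mu(f) : \mu \in J(g)\}$ and use Proposition \ref{dual:representation} to parametrise the admissible measures, turning the problem into a finite-dimensional linear program on the simplex $\{(\alpha,\beta) \in \R_0^+\times\R_0^+ : \alpha+\beta = \|g\|_\infty\}$.

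First I would apply Proposition \ref{dual:representation}: any $\mu \in J(g)$ has Jordan decomposition $\mu = \mu_P - \mu_N$ with $\mu_P$ supported on $E_g^P$, $\mu_N$ supported on $E_g^N$, and $\mu_P(E_g^P) + \mu_N(E_g^N) = \|g\|_\infty$. Hence
\[\mu(f) = \int_{E_g^P} f\, d\mu_P - \int_{E_g^N} f\, d\mu_N.\]
Since $E_g^P$ and $E_g^N$ are closed in the compact space $M$, they are compact (or empty), and the continuous function $f$ attains its extrema on each. Setting $m_P := \min_{x \in E_g^P} f(x)$ and $m_N := \min_{x \in E_g^N}(-f(x))$ (with $\min\emptyset = \infty$), the integrals are bounded below termwise by
\[\mu(f) \;\geq\; m_P\,\mu_P(E_g^P) + m_N\,\mu_N(E_g^N).\]
Writing $\alpha := \mu_P(E_g^P)$ and $\beta := \mu_N(E_g^N)$, the affine functional $(\alpha,\beta) \mapsto m_P\alpha + m_N\beta$ on the segment $\alpha+\beta = \|g\|_\infty$, $\alpha,\beta \geq 0$, attains its minimum at an endpoint, yielding the lower bound $\|g\|_\infty\,\min\{m_P,m_N\}$.

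Second, I would verify achievability by exhibiting an explicit minimiser. If $m_P \leq m_N$, pick $x^* \in E_g^P$ with $f(x^*) = m_P$ and take $\mu := \|g\|_\infty\,\delta_{x^*}$; otherwise pick $y^* \in E_g^N$ realising $\max_{E_g^N} f$ and take $\mu := -\|g\|_\infty\,\delta_{y^*}$. In each case Proposition \ref{dual:representation} immediately confirms $\mu \in J(g)$, and direct evaluation gives $\mu(f) = \|g\|_\infty\min\{m_P,m_N\}$, matching the lower bound.

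The only real obstacle is careful bookkeeping for degenerate cases. When $E_g^P = \emptyset$ one is forced to take $\alpha = 0$, so the minimum over the simplex collapses to $\|g\|_\infty\, m_N$; the convention $\min\emptyset = \infty$ is designed exactly so that $\min\{\infty, m_N\} = m_N$, and the argument still produces a minimiser supported on $E_g^N$. The symmetric situation with $E_g^N = \emptyset$ is handled identically, and the trivial case $g \equiv 0$ (where $J(g) = \{0\}$ and both sides vanish) is immediate.
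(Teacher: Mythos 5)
Your proposal is correct and follows essentially the same route as the paper: both use Proposition \ref{dual:representation} to reduce the infimum over $J(g)$ to measures localized on $E_g^P$ and $E_g^N$, and both exhibit the scaled Dirac measures $\pm\|g\|_\infty\delta_{x^*}$ as minimizers. The only difference is that you spell out the lower bound via the termwise integral estimate and the endpoint argument on the segment $\alpha+\beta=\|g\|_\infty$, a step the paper dismisses as ``easy to see''; this is a welcome addition rather than a deviation.
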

Note that $E_g^P=\emptyset=E_g^N$ is impossible, and that therefore the right-hand side 
is finite.
\begin{proof}
Since $g$ is continuous, the sets $E_g^P$ and $E_g^N$ are non\-emp\-ty and compact.
Since $f$ is continuous, it attains its minimum over $E_g^P$ at some $x_g^P\in E_g^P$ and its maximum
over $E_g^N$ at some $x_g^N\in E_g^N$.
As the Dirac measures $\delta_{x_g^P}$ and $\delta_{x_g^N}$ satisfy $\delta_{x_g^P}\in\borel(M)^+$ 
and $\delta_{x_g^N}\in\borel(M)^+$, and because of 
\[\delta_{x_g^P}(M\setminus E_g^P)=0=\delta_{x_g^N}(M\setminus E_g^N)\]
and $\|\delta_{x_g^P}\|=\|\delta_{x_g^N}\|=1$, Proposition \ref{dual:representation} implies 
$\|g\|_\infty\delta_{x_g^P}\in J(g)$ and $-\|g\|_\infty\delta_{x_g^N}\in J(g)$.
Therefore, Proposition \ref{dual:representation} yields
\begin{align*}
&\langle f,g\rangle_- = \inf\{\mu(f): \mu\in J(g)\}
\le \|g\|_\infty\min\{\delta_{x_g^P}(f),-\delta_{x_g^N}(f)\}\\
&= \|g\|_\infty\min\{f(x_g^P),-f(x_g^N)\}
= \|g\|_\infty\min\{\min_{x\in E_g^P}f(x),-\max_{x\in E_g^N}f(x)\}.
\end{align*}
It is easy to see that no $\mu\in J(g)$ yields a lower value.
\end{proof}

When $X=C(\S^{d-1})$ and $A,B\in\mc{K}_c(\R^d)$,
explicit expressions for the sets $E_{\sigma_A-\sigma_B}^P$ and $E_{\sigma_A-\sigma_B}^N$ 
can be obtained using the following proposition about variational inequalities.
\begin{proposition} \label{variational}
Let $A\in\mc{K}_c(\R^d)$, $a^*\in A$ and $x\in\R^d$. Then 
\begin{eqnarray}
\|x-a^*\|=\dist(x,A)\ &\Leftrightarrow&\ \langle x-a^*,a-a^*\rangle\le0\quad \text{for all}\ a\in A,\label{min:vip}\\
\|a^*-x\|=\dist(A,x)\ &\Leftrightarrow&\ \langle x-a^*,a-a^*\rangle\ge0\quad \text{for all}\ a\in A.\label{max:vip}
\end{eqnarray}
\end{proposition}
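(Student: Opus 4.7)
My plan is to reduce all four implications to the single algebraic identity
\begin{equation*}
\|a - x\|^2 - \|a^* - x\|^2 \;=\; \|a - a^*\|^2 - 2\,\langle x - a^*, a - a^*\rangle, \qquad a \in A,
\end{equation*}
which is obtained by writing $a - x = (a - a^*) + (a^* - x)$ and squaring. Combined with convexity of $A$, this identity will handle every direction.

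For the $\Rightarrow$ direction of both \eqref{min:vip} and \eqref{max:vip} I would run the usual variational argument. Given any $a \in A$, convexity of $A$ produces $a_t := (1-t)a^* + t a \in A$ for all $t \in [0,1]$; substituting $a_t$ for $a$ in the identity yields
\begin{equation*}
\|a_t - x\|^2 - \|a^* - x\|^2 \;=\; t^2\|a - a^*\|^2 - 2t\,\langle x - a^*, a - a^*\rangle.
\end{equation*}
If $\|a^* - x\| = \dist(x, A)$ the left-hand side is nonnegative; dividing by $t > 0$ and letting $t \searrow 0$ forces $\langle x - a^*, a - a^*\rangle \le 0$. If $\|a^* - x\| = \dist(A, x)$ the left-hand side is nonpositive and the same limiting procedure yields $\langle x - a^*, a - a^*\rangle \ge 0$.

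The $\Leftarrow$ direction of \eqref{min:vip} falls out of the identity at once: under the hypothesis, both $\|a - a^*\|^2 \ge 0$ and $-2\,\langle x - a^*, a - a^*\rangle \ge 0$, hence $\|a - x\|^2 \ge \|a^* - x\|^2$ for every $a \in A$, which is precisely $\|x - a^*\| = \dist(x, A)$.

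The main obstacle is the $\Leftarrow$ direction of \eqref{max:vip}, because the cross term in the identity now carries the wrong sign and nonnegativity of $\langle x - a^*, a - a^*\rangle$ alone cannot dominate the nonnegative contribution $\|a - a^*\|^2$. To close this step I would invoke convexity of $A$ a second time and apply the hypothesis along the whole segment $a_t = (1-t)a^* + ta \in A$ for $t \in (0,1]$, then combine the resulting one-parameter family of inequalities with the identity to extract the sharpened pointwise estimate $\|a - a^*\|^2 \le 2\,\langle x - a^*, a - a^*\rangle$, which is equivalent to $\|a - x\| \le \|a^* - x\|$. Finding the right way to pass from the first-order variational inequality to this second-order form on the convex set $A$ is the delicate part of the argument that I expect to require the most care.
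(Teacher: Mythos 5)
Your identity and the arguments built on it correctly dispose of three of the four implications: both directions of \eqref{min:vip} and the forward direction of \eqref{max:vip} (for the latter you do not even need the segment $a_t$ --- substituting $a$ directly into the identity already gives $2\langle x-a^*,a-a^*\rangle\ge\|a-a^*\|^2\ge 0$). The paper itself offers no more than this: it cites \cite[Proposition 7.4]{Clarke:13} for \eqref{min:vip} and asserts that \eqref{max:vip} is ``analogous'', so up to this point your write-up is more explicit than the source.

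The genuine gap is the backward direction of \eqref{max:vip}, which you rightly flag as the delicate step but whose proposed repair cannot succeed. Applying the hypothesis along the segment $a_t=(1-t)a^*+ta$ yields only $\langle x-a^*,a_t-a^*\rangle=t\,\langle x-a^*,a-a^*\rangle\ge 0$, which is the original inequality rescaled; the hypothesis is positively homogeneous in $a-a^*$, so no use of convexity can extract the second-order bound $\|a-a^*\|^2\le 2\langle x-a^*,a-a^*\rangle$ from it. In fact this implication is false as stated: take $d=2$, $x=(0,0)$, $a^*=(-1,0)$ and $A=\{-1\}\times[0,10]$. Then $\langle x-a^*,a-a^*\rangle=0\ge 0$ for every $a\in A$, yet $\dist(A,x)=\sqrt{101}\neq 1=\|a^*-x\|$; here $a^*$ is the nearest, not the farthest, point of $A$ from $x$. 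The correct equivalence for the farthest point --- which your identity delivers immediately in both directions --- replaces the right-hand side of \eqref{max:vip} by $\langle x-a^*,a-a^*\rangle\ge\tfrac12\|a-a^*\|^2$ for all $a\in A$. Only the forward implication of \eqref{max:vip} as printed is true (and that is the direction invoked later in the proof of the characterization of $E^P_{\sigma_A-\sigma_B}$), so the failure here is a defect of the statement rather than of your strategy; but the step cannot be closed as you plan, and you should either prove only the forward implication or strengthen the right-hand side as above.
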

\begin{proof}
Inequality \eqref{min:vip} is standard (see e.g.\ \cite[Proposition 7.4]{Clarke:13}),
and \eqref{max:vip} can be obtained by an analogous proof.
\end{proof}

We are now in the position to characterize the sets $E_{\sigma_A-\sigma_B}^P$ and $E_{\sigma_A-\sigma_B}^N$.
\begin{proposition} \label{extremal:sets:explained}
Let $A,B\in\mc{K}_c(\R^d)$, and let $\sigma_A,\sigma_B\in\Sigma(\R^d)$ be the corresponding
support functions.
\begin{itemize}
\item [a)] If $A=B$, then $E_{\sigma_A-\sigma_B}^P=\S^{d-1}$.
\item [b)] If $A\subsetneq B$, then $E_{\sigma_A-\sigma_B}^P=\emptyset$.
\item [c)] Let $A\not\subset B$.
Then for any $p\in\S^{d-1}$, we have $p\in E_{\sigma_A-\sigma_B}^P$ if and only if
there exist $a^*\in A$ and $b^*\in B$ such that $p=(a^*-b^*)/\|a^*-b^*\|$ and
\begin{equation} \label{realise:distances}
\|a^*-b^*\|=\dist(a^*,B)=\dist(A,B)=\dist_H(A,B).  
\end{equation}
\end{itemize}
\end{proposition}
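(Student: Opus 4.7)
The core of the argument is the pair of identities $\|\sigma_A-\sigma_B\|_\infty=\dist_H(A,B)$ (Proposition~\ref{elementary:support}(c)) and $\max_{p\in\S^{d-1}}(\sigma_A(p)-\sigma_B(p))=\dist(A,B)$; the latter is Proposition~\ref{elementary:support}(b) combined with the fact that $\bar\sigma_A-\bar\sigma_B$ is positively homogeneous, so whenever $\dist(A,B)>0$ the maximum over $\B_1(0)$ is attained on $\S^{d-1}$. Parts (a) and (b) then follow by sign considerations: if $A=B$ the difference vanishes and the definition of $E_f^P$ immediately gives $\S^{d-1}$; if $A\subsetneq B$ then $\sigma_A\le\sigma_B$ pointwise while $\|\sigma_A-\sigma_B\|_\infty=\dist(B,A)>0$, so no $p$ can satisfy $f(p)=+\|f\|_\infty$.

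For part (c) the assumption $A\not\subset B$ gives $\dist(A,B)>0$. The necessity direction rests on the chain
\[\sigma_A(p)-\sigma_B(p)=\inf_{b\in B}\langle p,a^*-b\rangle\le\inf_{b\in B}\|a^*-b\|=\dist(a^*,B)\le\dist(A,B)\le\dist_H(A,B),\]
where $a^*\in A$ is selected (by compactness) with $\langle p,a^*\rangle=\sigma_A(p)$ and the first inequality uses $\|p\|=1$. For $p\in E_{\sigma_A-\sigma_B}^P$ the two ends coincide, hence equality throughout, which yields all three distance equalities in \eqref{realise:distances}. Setting $b^*:=\proj_B(a^*)$ so that $\|a^*-b^*\|=\dist(a^*,B)$, the attained infimum forces $\langle p,a^*-b^*\rangle=\|a^*-b^*\|$, and Cauchy--Schwarz pins down $p=(a^*-b^*)/\|a^*-b^*\|$.

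The sufficiency direction in (c) is where I expect the genuine work. Given $a^*,b^*$ with the stated properties, the variational inequality \eqref{min:vip} applied to $x=a^*$ and the set $B$ gives $\langle a^*-b^*,b-b^*\rangle\le 0$ for all $b\in B$; dividing by $\|a^*-b^*\|>0$ yields $\sigma_B(p)=\langle p,b^*\rangle$. The subtlety is that $a^*$ is only assumed to maximize $a\mapsto\dist(a,B)$ on $A$, not $a\mapsto\langle p,a\rangle$, so $\sigma_A(p)=\langle p,a^*\rangle$ needs a separate argument. I would argue by contradiction: if some $a\in A$ satisfied $\langle p,a\rangle>\langle p,a^*\rangle$, then $a_t:=a^*+t(a-a^*)\in A$ by convexity, and
\[\dist(a_t,B)\ge\inf_{b\in B}\langle p,a_t-b\rangle=\langle p,a_t\rangle-\sigma_B(p)=\|a^*-b^*\|+t\langle p,a-a^*\rangle\]
would exceed $\dist(A,B)=\|a^*-b^*\|$ for any $t>0$, contradicting $\dist(a_t,B)\le\dist(A,B)$. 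Once both support values are attained at $a^*,b^*$, the identity $\sigma_A(p)-\sigma_B(p)=\langle p,a^*-b^*\rangle=\|a^*-b^*\|=\dist_H(A,B)$ places $p$ in $E_{\sigma_A-\sigma_B}^P$. The bootstrap from ``farthest from $B$'' to ``maximizer of $\langle p,\cdot\rangle$'' is the only genuinely non-routine step; everything else is direct manipulation of the support-function identities.
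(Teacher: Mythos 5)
Your proof is correct, and parts a) and b) coincide with the paper's. For part c) the overall skeleton is the same as the paper's --- a chain of inequalities between $\sigma_A(p)-\sigma_B(p)$ and $\dist_H(A,B)$ that collapses to equalities for necessity, and identification of $a^*$, $b^*$ as maximizers of $\langle p,\cdot\rangle$ over $A$, $B$ for sufficiency --- but you diverge at exactly the step you flag as non-routine. In the necessity direction the paper selects $a^*$ as a maximizer of $a\mapsto\cos\angle(p,a-\proj_B(a))\,\|a-\proj_B(a)\|$ and carries the projection through the whole estimate, whereas you take $a^*$ to be a maximizer of $\langle p,\cdot\rangle$ over $A$ and only introduce $b^*=\proj_B(a^*)$ at the end; this is a cosmetic difference. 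In the sufficiency direction the divergence is substantive: the paper derives $\langle a^*-b^*,a\rangle\le\langle a^*-b^*,a^*\rangle$ for all $a\in A$ by invoking \eqref{max:vip} with $x=b^*$, which requires $\|a^*-b^*\|=\dist(A,b^*)=\max_{a\in A}\|a-b^*\|$ --- a condition that is not among the hypotheses \eqref{realise:distances} and can fail while \eqref{realise:distances} holds (e.g.\ $B=[0,10]\times\{0\}$, $A=\co\{(0,1),(10,\tfrac12)\}$, $a^*=(0,1)$, $b^*=(0,0)$, where the conclusion is still true but $a^*$ is far from being the farthest point of $A$ from $b^*$). Your contradiction argument --- any $a\in A$ with $\langle p,a\rangle>\langle p,a^*\rangle$ would give $\dist(a,B)\ge\langle p,a\rangle-\sigma_B(p)>\|a^*-b^*\|=\dist(A,B)$ --- obtains the needed inequality directly from \eqref{realise:distances} together with $\sigma_B(p)=\langle p,b^*\rangle$ (which you correctly get from \eqref{min:vip}), and is therefore the more careful route here; the convex-combination detour via $a_t$ is unnecessary (taking $t=1$, i.e.\ $a$ itself, already gives the contradiction), but harmless.
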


An analogous statement holds for the set $E_{\sigma_A-\sigma_B}^N$. 

\begin{proof}
If $A=B$, then $\sigma_A=\sigma_B$, and hence
\[E_{\sigma_A-\sigma_B}^P=\{p\in\S^{d-1}:\sigma_A(p)-\sigma_B(p)=\|\sigma_A-\sigma_B\|_\infty\}=\S^{d-1},\]
which proves a).
If $A\subsetneq B$, then $\|\sigma_A-\sigma_B\|_\infty>0$ and $\sigma_A-\sigma_B\le 0$,
so that
\[E_{\sigma_A-\sigma_B}^P=\{p\in\S^{d-1}:\sigma_A(p)-\sigma_B(p)=\|\sigma_A-\sigma_B\|_\infty\}=\emptyset,\]
which is b).

\medskip

Let us show the equivalence c). Let $p\in E_{\sigma_A-\sigma_B}^P$. %Then
Using Proposition \ref{elementary:support}, we find
\begin{align*}
\dist_H(A,B) &= \|\sigma_A-\sigma_B\|_\infty = \sigma_A(p)-\sigma_B(p)
= \sup_{a\in A}\langle p,a\rangle - \sup_{b\in B}\langle p,b\rangle\\
&= \sup_{a\in A}\inf_{b\in B}\langle p,a-b\rangle
= \sup_{a\in A}\inf_{b\in B}\cos\angle(p,a-b)\|a-b\|\\
&\le \sup_{a\in A}\cos\angle(p,a-\proj_B(a))\|a-\proj_B(a)\|.
\end{align*}
By compactness of $A$ and continuity of the above expression, there 
exists $a^*\in A$ such that 
\begin{align*}
\dist_H(A,B) &\le \sup_{a\in A}\cos\angle(p,a-\proj_B(a))\|a-\proj_B(a)\|\\ 
&= \cos\angle(p,a^*-\proj_B(a^*))\|a^*-\proj_B(a^*)\|\\
&= \cos\angle(p,a^*-\proj_B(a^*))\dist(a^*,B)\\
&\le \cos\angle(p,a^*-\proj_B(a^*))\dist(A,B) \le \dist(A,B). 
\end{align*}
Hence the above inequalities are, in fact, equalities, which enforces 
\begin{align*}
&\cos\angle(p,a^*-\proj_B(a^*))=1,\\
&0<\dist_H(A,B) = \dist(A,B) = \dist(a^*,B).
\end{align*}
Therefore, $a^*$ and $b^*:=\proj_B(a^*)\in B$ satisfy \eqref{realise:distances} and $p=(a^*-b^*)/\|a^*-b^*\|$.

\medskip

To show the opposite implication, let $a^*\in A$ and $b^*\in B$ satisfy \eqref{realise:distances}
and set $p=(a^*-b^*)/\|a^*-b^*\|$. 
Note that \eqref{realise:distances} and the assumption $A\not\subset B$ guarantee $a^*\neq b^*$.
Using \eqref{max:vip} and \eqref{min:vip}, we obtain 
\begin{align*}
\langle a^*-b^*,a\rangle \le \langle a^*-b^*,a^*\rangle\ \text{for all}\ a\in A,\\
\langle a^*-b^*,b\rangle \le \langle a^*-b^*,b^*\rangle\ \text{for all}\ b\in B,
\end{align*}
so that
\begin{align*}
\sup_{a\in A}\langle a^*-b^*,a\rangle = \langle a^*-b^*,a^*\rangle,\\
\sup_{b\in B}\langle a^*-b^*,b\rangle = \langle a^*-b^*,b^*\rangle.
\end{align*}
Hence, using Proposition \ref{elementary:support}, we find
\begin{align*}
\sigma_A(p)-\sigma_B(p) &= \sup_{a\in A}\langle p,a\rangle - \sup_{b\in B}\langle p,b\rangle\\ 
&= \tfrac{1}{\|a^*-b^*\|}\big(\sup_{a\in A}\langle a^*-b^*,a\rangle-\sup_{b\in B}\langle a^*-b^*,b\rangle \big)\\
&= \tfrac{1}{\|a^*-b^*\|}\big(\langle a^*-b^*,a^*\rangle-\langle a^*-b^*,b^*\rangle \big)\\
&= \|a^*-b^*\| = \dist_H(A,B) = \|\sigma_A-\sigma_B\|_\infty,
\end{align*}
so that $p\in E_{\sigma_A-\sigma_B}^P$.
\end{proof}

\section{Existence and uniqueness of solutions} \label{sec:ex:uni}

In this section we apply standard existence and uniqueness results for the initial value problem
\begin{equation} \label{general:ODE}
x'(t) = f(t,x(t)),\quad x(0)=x_0,
\end{equation}
on a real Banach space $X$ to the particular case of set differential equations in support function representation \eqref{support:ODE}. 
We first collect the necessary terminology and state a standard existence and uniqueness result for differential 
equations in Banach spaces from \cite{Deimling:77}. 

\begin{definition} \label{def:noncompactnessmeasures}
Let $X$ be a Banach space, and let $\mc{D}(X)$ be the family of all bounded subsets of $X$. 
The Kuratowski measure of non-compactness $\alpha:\mc{D}(X)\to\R$ is defined by
\begin{equation*}
\alpha(A) = \inf\{d>0: A\ \text{admits a finite covering by sets of diameter $\le d$}\}.
\end{equation*}
\end{definition}

Definition \ref{classes} introduces standard classes of growth functions from \cite{Deimling:77}.
The symbol $D^-$ denotes the Dini derivative
\[D^-\rho(t)=\liminf_{h\searrow 0}h^{-1}(\rho(t+h)-\rho(t))\]
of functions $\rho:\R\to\R$.
\begin{definition}  \label{classes}
We distinguish the following classes of growth functions.
\begin{itemize}
\item [(U0)] A continuous function $\omega:\R_0^+\rightarrow\R_0^+$ is said to be of class $U_0$ if the initial value problem
\[\rho'=\omega(\rho),\quad \rho(0)=0\]
possesses only the trivial solution.
\item [(U1)] Let $b>0$. A function $\omega:(0,b]\times\mathbb{R}_0^+\rightarrow\mathbb{R}$ is said to be of class $U_1$ if for each $\epsilon>0$ 
there is a $\delta>0$, a sequence $t_i\rightarrow 0^+$ and a sequence of continuous functions $\rho_i :[t_i, b]\rightarrow\mathbb{R}_0^+$ 
such that 
\begin{itemize}
\item [a)] $\rho_i(t_i)\ge \delta t_i$ for all $i\in\N$,
\item [b)] $0 < \rho_i(t)\le \epsilon$ for all $i\in\N$ and $t\in (t_i,b]$,
\item [c)] there exists a sequence $(\delta_i)_{i\in\N}$ with $\delta_i>0$ such that
$D^-\rho_i(t) \ge \omega(t,\rho_i(t))+\delta_i$ for all $i\in\N$ and $t\in (t_i,b]$.
\end{itemize}
\end{itemize}
\end{definition}

The following existence and uniqueness theorem is an excerpt of \cite[Theorem 4.1]{Deimling:77} applied in the 
present context. 
\begin{theorem} \label{ex:uni}
Let $(X,\|\cdot\|_X)$ be a Banach space, and let $D\subset X$, $x_0\in D$ and $r>0$ be such that $D_r:=D\cap B_r(x_0)$ is closed and convex.
Let $c>0$, let $f:[0,T]\times D_r\rightarrow X$ be a continuous function satisfying
\begin{equation*} \label{bound}
\|f(t,x)\|_X\le c\quad\text{for all}\ t\in[0,T],\ x\in D_r,
\end{equation*}
and let $b:=\min\{T,r/c\}$.
Suppose that the subtangent condition
\begin{equation*}
f(t,x) \in T_D(x)\quad\text{for all}\ t\in[0,b],\ x\in \partial D\cap B_r(x_0)
\end{equation*}
holds. Then the initial value problem \eqref{general:ODE} has a solution $\varphi:[0,b]\rightarrow D_r$ provided
one of the following additional conditions is satisfied:
\begin{itemize}
\item [a)] There exists a function $\omega:\R_0^+\rightarrow\R_0^+$ of class $U_0$ such that 
\[\alpha(f([0,b]\times A)) \le \omega(\alpha(A))\quad\text{for all}\ A\subset D_r.\]
\item [b)] There exists a function $\omega:(0,b]\times\R_+\rightarrow\R_+$ of class $U_1$ such that
\[\langle f(t,x)-f(t,y),x-y\rangle_- \le \omega(t,\|x-y\|_X)\|x-y\|_X\]
for all $t\in[0,b]$ and $x,y\in D_r$.
\end{itemize}
In case b), the solution is unique.
\end{theorem}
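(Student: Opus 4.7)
Because the statement is presented as an extract from \cite{Deimling:77}, my proof proposal is to replicate the classical Nagumo-type construction in a Banach space, with existence via approximate Euler polygons and uniqueness via a Dini-derivative comparison argument.

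\textbf{Step 1 (Euler approximations).} For each $n\in\N$, I would build a piecewise affine curve $\varphi_n:[0,b]\to D_r$ by iterating a rule of the form $\varphi_n(t_{k+1})=\varphi_n(t_k)+(t_{k+1}-t_k)v_k$ with $v_k$ close to $f(t_k,\varphi_n(t_k))$. The subtangent condition on $\partial D\cap B_r(x_0)$, together with the definition of $T_D$, furnishes for every $(t,x)$ with $x\in\partial D\cap B_r(x_0)$ a time step $h$ for which $\operatorname{dist}(x+hf(t,x),D)$ is arbitrarily small compared to $h$; this allows me to choose the step $t_{k+1}-t_k$ and a correcting vector of small norm so that $\varphi_n(t_{k+1})\in D$. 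The bound $\|f\|_X\le c$ and the choice $b=\min\{T,r/c\}$ keep $\varphi_n$ inside $B_r(x_0)$ for all $t\in[0,b]$, and $\{\varphi_n\}$ is uniformly $c$-Lipschitz.

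\textbf{Step 2 (compactness, case a)).} Equicontinuity is immediate from the uniform Lipschitz bound. To obtain pointwise relative compactness I would introduce $\phi(t):=\alpha(\{\varphi_n(t):n\in\N\})$ and estimate its upper Dini derivative. Using that $\varphi_n(t+h)-\varphi_n(t)$ lies close to $h f([0,b]\times\{\varphi_n(t)\})$, subadditivity and Lipschitz-invariance properties of $\alpha$ yield $D^+\phi(t)\le\omega(\phi(t))$. Since $\phi(0)=0$ and $\omega\in U_0$, a standard differential inequality forces $\phi\equiv 0$, so Arzelà–Ascoli provides a uniformly convergent subsequence $\varphi_n\to\varphi$. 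Passing to the limit in the integrated Euler relation shows $\varphi'(t)=f(t,\varphi(t))$ on $[0,b]$.

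\textbf{Step 3 (case b) and uniqueness).} Under the semi-inner product estimate, existence again follows from Step 1, because the dissipative bound on $\langle f(t,x)-f(t,y),x-y\rangle_-$ gives uniform control of $\|\varphi_n(t)-\varphi_m(t)\|$ and in particular makes the sequence $\{\varphi_n\}$ Cauchy in $C([0,b],X)$, removing the need for a non-compactness estimate. For uniqueness, take two solutions $\varphi,\psi$ and set $\rho(t):=\|\varphi(t)-\psi(t)\|_X$. The standard Kato-type identity in Banach spaces,
\[
\rho(t) D^-\rho(t) \;\le\; \langle f(t,\varphi(t))-f(t,\psi(t)),\varphi(t)-\psi(t)\rangle_-,
\]
combined with the hypothesis on $\omega$ gives $D^-\rho(t)\le\omega(t,\rho(t))$ wherever $\rho(t)>0$. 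Assuming $\rho\not\equiv 0$, I would compare $\rho$ with the family $\rho_i$ from the definition of $U_1$ on an interval where $\rho\le\epsilon$: the strict inequality $D^-\rho_i\ge\omega(t,\rho_i)+\delta_i$ together with $\rho_i(t_i)\ge\delta t_i$ produces a standard contradiction to $\rho(0)=0$, so $\rho\equiv 0$.

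\textbf{Main obstacle.} The delicate part is Step 1: the subtangent condition is purely asymptotic, so constructing the Euler polygons simultaneously inside $D_r$, with controlled deviation from $f(t_k,\varphi_n(t_k))$, and with step sizes spanning all of $[0,b]$, requires a careful selection argument. The measure-of-non-compactness estimate in Step 2 is the other non-routine point, since it relies on $\alpha$ interacting correctly with the image of $f$ along the approximating sequence.
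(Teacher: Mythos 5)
The paper does not actually prove this theorem: it is quoted verbatim as ``an excerpt of \cite[Theorem 4.1]{Deimling:77}'' and used as a black box, so there is no internal proof to compare against. Your outline is, for what it is worth, a faithful reconstruction of the argument in that source: Euler polygons built via the subtangent condition, an Arzel\`a--Ascoli limit obtained in case a) by showing that $\phi(t)=\alpha(\{\varphi_n(t)\})$ satisfies $D^+\phi\le\omega(\phi)$ with $\phi(0)=0$, a Cauchy argument on the polygons in case b), and uniqueness via the Kato inequality $\rho\,D^-\rho\le\langle\varphi'-\psi',\varphi-\psi\rangle_-$ together with the $U_1$ comparison family. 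Two points in your sketch deserve to be made honest rather than waved at. First, in Step 1 the extension of an $\epsilon$-approximate polygon to all of $[0,b]$ is not a ``careful selection'' but a maximality argument (Zorn's lemma or a supremum-of-reachable-times argument): the subtangent condition only gives arbitrarily small admissible steps at each point, so one must rule out the step sizes summing to less than $b$. Second, in the uniqueness step the contradiction does not follow from $\rho(0)=0$ alone; to get below the barrier $\rho_i(t_i)\ge\delta t_i$ you need $\rho(t)=o(t)$ as $t\searrow 0$, which holds because both solutions have the same derivative $f(0,x_0)$ at $t=0$. With those two ingredients supplied, your plan is the standard and correct one.
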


When adapting Theorem \ref{ex:uni} to set differential equations, we will frequently use the version 
\begin{equation} \label{subtangent}
f(t,\sigma)\in\overline{\Sigma(\R^d)-\R_0^+\sigma}\quad\text{for all}\ t\in[0,T],\ \sigma\in\Sigma(\R^d),
\end{equation}
of the subtangent condition to ensure that solutions do not leave the cone $\Sigma(\R^d)$
associated with $\mc{K}_c(\R^d)$.

Our first result is a Peano type theorem.
\begin{theorem} \label{Peano}
Let $f:[0,T]\times\Sigma(\R^d)\to\hat\Sigma(\R^d)$ be a continuous function, let $A_0\in\mc{K}_c(\R^d)$, and let $r>0$.
Then there exists $c>0$ such that 
\begin{equation} \label{compact:bound}
\|f(t,\sigma)\|_\infty\le c\quad \text{for all}\ t\in[0,T],\ \sigma\in D_r=\Sigma(\R^d)\cap B_r(\sigma_{A_0}). 
\end{equation}
Let $b:=\min\{T,r/c\}$. If, in addition, the subtangent condition \eqref{subtangent} holds,
then there exists a solution $\sigma:[0,b]\to D_r$ of the set differential equation \eqref{support:ODE}
in support function representation.
\end{theorem}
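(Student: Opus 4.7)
The plan is to reduce the theorem to Theorem~\ref{ex:uni}(a) applied in $X = C(\S^{d-1})$ with $D := \Sigma(\R^d)$, exploiting the fact that Proposition~\ref{locally:compact} makes $D_r$ compact, which collapses the Kuratowski-measure hypothesis to a triviality.

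First I would handle the bound \eqref{compact:bound}. By Proposition~\ref{locally:compact}, the set $D_r = \Sigma(\R^d) \cap \B_r(\sigma_{A_0})$ is a compact subset of $C(\S^{d-1})$, hence $[0,T] \times D_r$ is compact. Since $f$ is continuous with values in the normed space $(\hat\Sigma(\R^d), \|\cdot\|_\infty) \subset C(\S^{d-1})$, its norm is continuous on a compact set and therefore attains a finite maximum $c$. Set $b := \min\{T, r/c\}$.

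Next I would verify the structural hypotheses of Theorem~\ref{ex:uni}. Taking $D = \Sigma(\R^d)$, Proposition~\ref{elementary:support} gives convexity and Proposition~\ref{locally:compact} gives closedness, so $D_r$ is closed and convex. For the subtangent condition, note that because $\Sigma(\R^d)$ is a convex cone containing $\sigma$, we have $\R_0^+ \sigma \subset \Sigma(\R^d)$, and therefore
\[
\Sigma(\R^d) + \R\sigma \;=\; \Sigma(\R^d) - \R_0^+ \sigma.
\]
Proposition~\ref{tangent:to:cone} then identifies $T_{\Sigma(\R^d)}(\sigma) = \overline{\Sigma(\R^d) - \R_0^+ \sigma}$, so the standing assumption \eqref{subtangent} coincides with $f(t,\sigma) \in T_D(\sigma)$ and in particular holds on $\partial D \cap \B_r(\sigma_{A_0})$.

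Finally I would dispatch the Kuratowski hypothesis (a) of Theorem~\ref{ex:uni}. Because $D_r$ is compact, every subset $A \subset D_r$ is precompact, and continuity of $f$ on the compact product $[0,b] \times \overline{A} \subset [0,b] \times D_r$ makes $f([0,b] \times A)$ precompact as well. Hence $\alpha(A) = 0$ and $\alpha(f([0,b] \times A)) = 0$ for every $A \subset D_r$, so the choice $\omega \equiv 0$ (which is trivially of class $U_0$, since $\rho'=0$ with $\rho(0)=0$ forces $\rho \equiv 0$) verifies the hypothesis. Theorem~\ref{ex:uni}(a) then yields a solution $\sigma:[0,b] \to D_r$ of \eqref{support:ODE}.

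The only point requiring any care is the translation of \eqref{subtangent} into the form demanded by Theorem~\ref{ex:uni}; everything else is bookkeeping built on the compactness statement of Proposition~\ref{locally:compact}, which is the real workhorse here.
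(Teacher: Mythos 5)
Your proposal is correct and follows essentially the same route as the paper's proof: reduce to Theorem~\ref{ex:uni}(a) with $D=\Sigma(\R^d)$, use Proposition~\ref{locally:compact} to get compactness of $D_r$ (hence the bound $c$ and the vanishing of the Kuratowski measure), and use Proposition~\ref{tangent:to:cone} to convert \eqref{subtangent} into the subtangent hypothesis. The only cosmetic differences are your choice of $\omega\equiv 0$ in place of the paper's $\omega(\rho)=\rho$ and your slightly more explicit verification that $\Sigma(\R^d)+\R\sigma=\Sigma(\R^d)-\R_0^+\sigma$.
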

\begin{proof}
Since balls defined in the maximum norm are always convex and $\Sigma(\R^d)$ is a convex cone,
the intersection $D_r$ is convex.
By Proposition \ref{locally:compact}, the set $D_r$ is compact, and the existence of some $c>0$ such that 
\eqref{compact:bound} holds is implied by the continuity of $f$.
By Proposition \ref{tangent:to:cone}, condition \eqref{subtangent} implies 
\[f(t,\sigma) \in T_{\Sigma(\R^d)}(\sigma)\quad \text{for all}\ t\in[0,b],\ \sigma\in D_r.\]
By compactness of $D_r$ and continuity of $f$, the image $f([0,T]\times D_r)$ is compact, 
and hence we have
\[\alpha(f([0,T]\times A))=0=\alpha(A)\quad\text{for all}\ A\subset D_r,\]
so that the compactness assumptions of Theorem \ref{ex:uni}a) are trivially satisfied
with $\omega(\rho)=\rho$ of class $U_0$.
\end{proof}

The next result is a Picard-Lindel\"of type statement.
\begin{theorem} \label{picard}
Let $A_0\in\mc{K}_c(\R^d)$, and let $f:[0,T]\times\Sigma(\R^d)\to\hat\Sigma(\R^d)$ be continuous and Lipschitz continuous
in its second argument, i.e.\ we assume that there exists $L>0$ such that
\[\|f(t,\sigma_A)-f(t,\sigma_B)\|_\infty \le L\|\sigma_A-\sigma_B\|_\infty = L\dist_H(A,B)\]
for all $A,B\in\mc{K}_c(\R^d)$.
If, in addition, $f$ satisfies condition \eqref{subtangent}, then there exists a unique solution 
$\sigma:[0,T]\to\Sigma(\R^d)$ of \eqref{support:ODE}.
\end{theorem}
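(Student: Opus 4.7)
The plan is to apply Theorem \ref{ex:uni}b) to obtain a local unique solution on $\Sigma(\R^d)\cap\B_r(\sigma_{A_0})$ for a well-chosen $r>0$, and then extend it to all of $[0,T]$ by a standard continuation argument based on an a priori Gronwall estimate.

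First I would verify the semi-inner product estimate required in Theorem \ref{ex:uni}b). Proposition \ref{semi:inner:explained} gives
\[\langle u,v\rangle_- = \|v\|_\infty\min\bigl\{\min_{x\in E_v^P}u(x),\min_{x\in E_v^N}-u(x)\bigr\},\]
and since each of these inner minima is at most $\|u\|_\infty$, we obtain the general bound $\langle u,v\rangle_-\le\|u\|_\infty\|v\|_\infty$ on $C(\S^{d-1})$. Combined with the Lipschitz hypothesis this gives
\[\langle f(t,\sigma)-f(t,\sigma'),\sigma-\sigma'\rangle_- \le L\|\sigma-\sigma'\|_\infty^2\quad\text{for all}\ t,\sigma,\sigma',\]
so Theorem \ref{ex:uni}b) applies with $\omega(t,\rho):=L\rho$. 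This $\omega$ is of class $U_1$: for a prescribed $\epsilon>0$, the curves $\rho_i(t):=t_ie^{2L(t-t_i)}$ on $[t_i,b]$ satisfy $\rho_i(t_i)=t_i$, $\rho_i(t)\le\epsilon$ once $t_i\le\epsilon e^{-2Lb}$, and $D^-\rho_i(t)=2L\rho_i(t)\ge L\rho_i(t)+Lt_i$, so conditions (a)--(c) of Definition \ref{classes} hold with $\delta=1$ and $\delta_i=Lt_i$. The subtangent condition \eqref{subtangent} translates into $f(t,\sigma)\in T_{\Sigma(\R^d)}(\sigma)$ via Proposition \ref{tangent:to:cone}, exactly as in the proof of Theorem \ref{Peano}.

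Next I would set up a global a priori bound. Put $M:=\max_{t\in[0,T]}\|f(t,\sigma_{A_0})\|_\infty$, which is finite by continuity of $f$. The Lipschitz assumption gives $\|f(t,\sigma)\|_\infty\le M+L\|\sigma-\sigma_{A_0}\|_\infty$ for every admissible $(t,\sigma)$. For any hypothetical solution $\sigma$ on $[0,T^*]$, integrating \eqref{support:ODE} and invoking Gronwall yield
\[\|\sigma(t)-\sigma_{A_0}\|_\infty\le\tfrac{M}{L}(e^{Lt}-1)\quad\text{for all}\ t\in[0,T^*].\]
Fix $r>\tfrac{M}{L}(e^{LT}-1)$, so any solution stays strictly inside $\B_r(\sigma_{A_0})$. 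By Proposition \ref{locally:compact}, the set $D_r:=\Sigma(\R^d)\cap\B_r(\sigma_{A_0})$ is closed and convex, and $f$ is bounded on $[0,T]\times D_r$ by some $c>0$. Theorem \ref{ex:uni}b) then delivers a unique solution on $[0,b]$ with $b=\min\{T,r/c\}$.

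Finally I would run the continuation. Let $[0,T^*)$ be the maximal existence interval of the unique solution $\sigma$. Since $\|\sigma'(t)\|_\infty\le c$ on $[0,T^*)$, the curve $\sigma$ is Lipschitz in $t$, so $\sigma(T^*):=\lim_{t\uparrow T^*}\sigma(t)$ exists in $C(\S^{d-1})$; it lies in $\Sigma(\R^d)$ by closedness (Proposition \ref{locally:compact}) and strictly inside $\B_r(\sigma_{A_0})$ by the Gronwall bound. If $T^*<T$, one further application of Theorem \ref{ex:uni}b) at the initial datum $(T^*,\sigma(T^*))$ extends the solution past $T^*$, contradicting maximality; hence $T^*=T$. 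Local uniqueness from Theorem \ref{ex:uni}b) propagates to global uniqueness in the standard way: if two solutions were to differ, the supremum of times on which they agree would again give rise to a contradiction via local uniqueness. I expect the continuation step to be the main technical obstacle, as it is the place where the closedness of $\Sigma(\R^d)$ and the uniform Lipschitz-in-$t$ regularity of solutions must work in tandem; both are indeed available here.
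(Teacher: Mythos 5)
Your proposal is correct, and its core coincides with the paper's: both reduce the theorem to Theorem \ref{ex:uni}b) by deriving the one-sided estimate $\langle f(t,\sigma)-f(t,\tilde\sigma),\sigma-\tilde\sigma\rangle_-\le L\|\sigma-\tilde\sigma\|_\infty^2$ (you via the formula in Proposition \ref{semi:inner:explained}, the paper via the cruder but equivalent bound $\mu(g)\le\vertiii{\mu}\,\|g\|_\infty$ for $\mu\in J(\sigma-\tilde\sigma)$), and both handle the subtangent condition through Proposition \ref{tangent:to:cone} exactly as in Theorem \ref{Peano}. Where you genuinely diverge is the globalization step. The paper iterates the local theorem on balls of fixed radius $1$ around successive endpoints, producing step lengths $b_k=1/(kL+\kappa)$ whose sum diverges, so the construction must reach $T$ after finitely many steps; no a priori bound on the solution is needed. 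You instead prove an a priori Gronwall bound $\|\sigma(t)-\sigma_{A_0}\|_\infty\le\tfrac{M}{L}(e^{Lt}-1)$, fix one large ball $\B_r(\sigma_{A_0})$ containing every solution, and run a maximal-interval continuation using the uniform Lipschitz-in-$t$ bound and the closedness of $\Sigma(\R^d)$ to pass to the limit at $T^*$. Your route costs the Gronwall estimate and the limit-at-$T^*$ argument but buys an explicit quantitative bound on the solution and a single working domain $D_r$; the paper's route is shorter but relies on the slightly delicate divergent-series bookkeeping. You also explicitly verify that $\omega(t,\rho)=L\rho$ is of class $U_1$, a point the paper asserts without proof; that verification is correct and is a worthwhile addition.
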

\begin{proof}
As $f$ is continuous and $[0,T]$ is compact, we have
\[\kappa:=\sup_{t\in[0,T]}\|f(t,\sigma_{A_0})\|_\infty<\infty,\]
and Lipschitz continuity of $f$ yields
\[c_r:=\sup_{t\in[0,T],\ \sigma\in B_r(\sigma_{A_0})\cap\Sigma(\R^d)}\|f(t,\sigma)\|_\infty \le Lr+\kappa.\]
Because of
\begin{align*}
&\langle f(t,\sigma)-f(t,\tilde\sigma),\sigma-\tilde\sigma\rangle_- 
= \inf_{\mu\in J(\sigma-\tilde\sigma)}\mu(f(t,\sigma)-f(t,\tilde\sigma))\\
&\le \inf_{\mu\in J(\sigma-\tilde\sigma)}\vertiii{\mu}\|f(t,\sigma)-f(t,\tilde\sigma)\|_\infty
\le L\|\sigma-\tilde\sigma\|_\infty^2
\end{align*}
for all $t\in[0,T]$ and $\sigma,\tilde\sigma\in\Sigma(\R^d)$,
and by the arguments in the preceding proof, all assumptions of Theorem \ref{ex:uni}b) are verified with $r=1$, $c=c_1$ and $\omega(t,s)=Ls$, 
so there exists a unique solution $\sigma_0(\cdot):[0,b_0]\rightarrow\Sigma(\R^d)\cap B_1(\sigma_{A_0})$ of \eqref{support:ODE} with
$b_0:=\min\{T,1/(L+\kappa)\}$.
If $1/(L+\kappa)<T$, the same argument yields a unique solution $\sigma_1(\cdot):[b_0,b_0+b_1]\rightarrow\Sigma(\R^d)\cap B_1(\sigma_0(b_0))$
of the set differential equation with $b_1:=\min\{T-b_0,1/(2L+\kappa)\}$.

Assume that $b_0+b_1<T$ and that this construction can be repeated indefinitely with $\sum_{k=0}^Nb_k<T$ for all $N\in\N$.
But then 
\[T\ge\sum_{k=0}^\infty b_k = \sum_{k=0}^\infty\tfrac{1}{kL+\kappa} = \infty,\]
which is a contradiction. 
Hence there exists a smallest index $N\in\N$ such that $b_N=T$.
Concatenating the unique solutions $\sigma_0,\ldots,\sigma_N$ yields a unique solution $\sigma:[0,T]\to\Sigma(\R^d)$
of \eqref{general:ODE} on the entire interval $[0,T]$.
\end{proof}

In contrast to the Picard-Lindel\"of type result above, the following statement fully exploits Theorem \ref{ex:uni}b)
and the considerations from Section \ref{sub:duality}.
Roughly speaking, it states that uniqueness of the solution can be guaranteed by controlling the relative velocity
$f(t,\sigma_A)-f(t,\sigma_B)$ for two sets $A,B\in\mc{K}_c(\R^d)$ in only one critical direction that is given by 
a pair $(a,b)\in A\times B$ which realizes the Hausdorff distance of $A$ and $B$.

\begin{figure}
\scriptsize
\begin{center}
\psfrag{A}{$A$}
\psfrag{B}{$B$}
\psfrag{a}{$a$}
\psfrag{b}{$b$}
\psfrag{v}{$v_r(p)$}
\psfrag{d}{$\dist_H(A,B)$}
\psfrag{f(A)}{$\tilde A$}
\psfrag{f(B)}{$\tilde B$}
\psfrag{f(A)(p)}{}
\psfrag{f(B)(p)}{}
\includegraphics[scale=0.5]{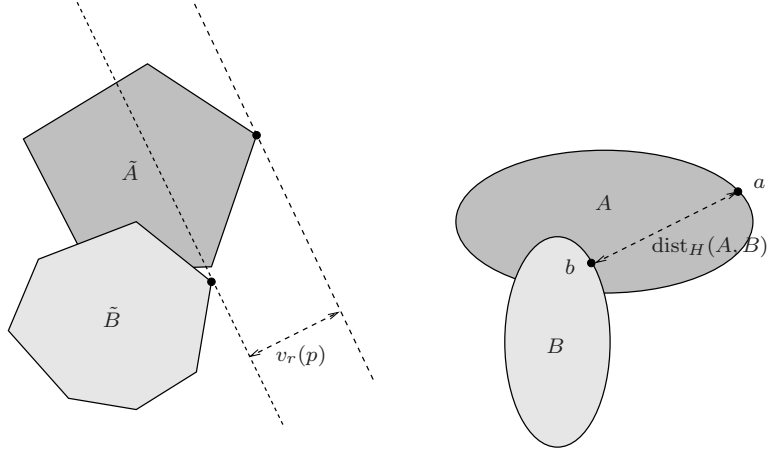}
\end{center}
\caption{Illustration of the geometric condition in Theorem \ref{OSL:thm} in an important special case.
Let $t\in(0,T)$ and assume that there exist $\tilde A,\tilde B\in\mc{K}_c(\R^d)$ such that $\sigma_{\tilde A}=f(t,\sigma_A)$
and $\sigma_{\tilde B}=f(t,\sigma_B)$.
Then the illustration depicts the relative velocity $v_r(p)=f(t,\sigma_A)(p)-f(t,\sigma_B)(p)$ in the critical direction 
$p=(a-b)/\|a-b\|$.\label{fig:vel}}
\end{figure}

\begin{theorem} \label{OSL:thm}
Let $f:[0,T]\times\Sigma(\R^d)\to\hat\Sigma(\R^d)$ be continuous, let $A_0\in\mc{K}_c(\R^d)$, and let $r>0$.
Let $b,c>0$ and $D_r$ be as in Theorem \ref{Peano}, let
\[D_r':=\{A\in\mc{K}_c(\R^d): \dist_H(A,A_0)\le r\},\]
let $\omega:(0,T]\times\mathbb{R}_+\rightarrow\mathbb{R}$ be of class $U_1$, and assume that the subtangent
condition \eqref{subtangent} holds.
If, in addiditon, for any $t\in[0,b]$ and $A,B\in D_r'$ with $A\neq B$, there exist
$a\in A$ and $b\in B$ such that $p:=(a-b)/\|a-b\|$ is well-defined and one of the conditions
\begin{align} \label{cond1}
\begin{split}
\|a-b\| = \dist(a,B) &= \dist(A,B) = \dist_H(A,B),\\
f(t,\sigma_A)(p)-f(t,\sigma_B)(p) &\le \omega(t,\dist_H(A,B))
\end{split}
\end{align}
and
\begin{align} \label{cond2}
\begin{split}
\|a-b\| = \dist(b,A) &= \dist(B,A) = \dist_H(A,B),\\
f(t,\sigma_B)(-p)-f(t,\sigma_A)(-p) &\le \omega(t,\dist_H(A,B))
\end{split}
\end{align}
is satisfied, then there exists a unique solution $\sigma:[0,b]\to D_r$ of \eqref{support:ODE}.
\end{theorem}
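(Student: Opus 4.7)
The plan is to reduce the statement to Theorem \ref{ex:uni}b) applied on the closed, convex set $D_r=\Sigma(\R^d)\cap\B_r(\sigma_{A_0})$ with the choice $\omega$ provided by the hypothesis. The geometric part of the assumptions in Theorem \ref{ex:uni}b) — convexity and closedness of $D_r$, boundedness of $f$ on $D_r$ by some constant $c>0$, and the subtangent condition $f(t,\sigma)\in T_{\Sigma(\R^d)}(\sigma)$ for $\sigma\in D_r$ — are handled exactly as in the proof of Theorem \ref{Peano}: compactness of $D_r$ from Proposition \ref{locally:compact} combined with continuity of $f$ yields $c$, and Proposition \ref{tangent:to:cone} together with hypothesis \eqref{subtangent} gives the subtangent inclusion. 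This reduces everything to verifying the one-sided Lipschitz estimate
\[
\langle f(t,\sigma_A)-f(t,\sigma_B),\,\sigma_A-\sigma_B\rangle_- \;\le\; \omega(t,\|\sigma_A-\sigma_B\|_\infty)\,\|\sigma_A-\sigma_B\|_\infty
\]
for all $t\in[0,b]$ and $\sigma_A,\sigma_B\in D_r$.

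The key step is to unpack this estimate using Proposition \ref{semi:inner:explained} and Proposition \ref{extremal:sets:explained}. The case $A=B$ is trivial, since both sides vanish. For $A\neq B$ we have $\|\sigma_A-\sigma_B\|_\infty=\dist_H(A,B)>0$ by Proposition \ref{elementary:support}c), and Proposition \ref{semi:inner:explained} gives
\[
\langle f(t,\sigma_A)-f(t,\sigma_B),\,\sigma_A-\sigma_B\rangle_- = \dist_H(A,B)\cdot m,
\]
where
\[
m=\min\Bigl\{\min_{p\in E^P}\bigl(f(t,\sigma_A)(p)-f(t,\sigma_B)(p)\bigr),\,\min_{p\in E^N}\bigl(f(t,\sigma_B)(p)-f(t,\sigma_A)(p)\bigr)\Bigr\}
\]
and $E^P,E^N$ are the extremal sets of $\sigma_A-\sigma_B$. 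It suffices to show $m\le\omega(t,\dist_H(A,B))$.

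Here the hypothesis does exactly what is needed. Since $\sigma_A,\sigma_B\in D_r$ implies $A,B\in D_r'$ via Proposition \ref{elementary:support}c), the assumption provides a pair $(a,b)\in A\times B$ satisfying \eqref{cond1} or \eqref{cond2}. In the first case, Proposition \ref{extremal:sets:explained}c) identifies $p=(a-b)/\|a-b\|$ as an element of $E^P$, so the bound on $f(t,\sigma_A)(p)-f(t,\sigma_B)(p)$ in \eqref{cond1} forces the first minimum in $m$ to be $\le\omega(t,\dist_H(A,B))$. In the second case, the analog of Proposition \ref{extremal:sets:explained}c) for $E^N$ (obtained by swapping the roles of $A$ and $B$, since $E_{\sigma_A-\sigma_B}^N=E_{\sigma_B-\sigma_A}^P$) identifies $-p$ as an element of $E^N$, and \eqref{cond2} forces the second minimum in $m$ to be $\le\omega(t,\dist_H(A,B))$. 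Either way, $m\le\omega(t,\dist_H(A,B))$, establishing the one-sided Lipschitz estimate and concluding the proof.

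The main obstacle is the bookkeeping in the last paragraph: matching the geometric conditions \eqref{cond1} and \eqref{cond2} to the correct extremal set ($E^P$ versus $E^N$) via Proposition \ref{extremal:sets:explained}, and noticing that the direction $-p$ relevant for $E^N$ is precisely the one in which \eqref{cond2} places a bound. Once that identification is made, the estimate falls out of the explicit semi-inner product formula, and the theorem follows directly from Theorem \ref{ex:uni}b).
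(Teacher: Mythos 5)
Your proposal is correct and follows essentially the same route as the paper: existence via Theorem \ref{Peano}, then uniqueness by verifying the one-sided Lipschitz condition of Theorem \ref{ex:uni}b) through Propositions \ref{semi:inner:explained} and \ref{extremal:sets:explained}. Your bookkeeping of the identification $E_{\sigma_A-\sigma_B}^N=E_{\sigma_B-\sigma_A}^P$ and of the direction $-p$ for condition \eqref{cond2} is in fact spelled out more explicitly than in the paper, which merely indicates that the cases $A\subsetneq B$, $B\subsetneq A$ and $A\not\subset B\wedge B\not\subset A$ must be checked separately.
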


The geometric principle behind conditions \eqref{cond1} and \eqref{cond2} is depicted in Figure \ref{fig:vel}
for the case when $f(t,\sigma_A),f(t,\sigma_B)\in\Sigma$.

\begin{proof}
By Theorem \ref{Peano}, we know that the desired solution exists.
According to Theorem \ref{ex:uni}b), to ensure uniqueness, we need to verify that 
\[\langle f(t,\sigma_A)-f(t,\sigma_B),\sigma_A-\sigma_B\rangle_-\le\omega(t,\|\sigma_A-\sigma_B\|_\infty)\|\sigma_A-\sigma_B\|_\infty\]
for any $t\in(0,b]$ and $\sigma_A,\sigma_B\in D_r$. 
By Proposition \ref{semi:inner:explained} this is true if and only if for any $t\in(0,b]$ and $\sigma_A,\sigma_B\in D_r$, 
at least one of the inequalities
\begin{align*}
\min_{p\in E_{\sigma_A-\sigma_B}^P}(f(t,\sigma_A)(p)-f(t,\sigma_B)(p))&\le\omega(t,\dist_H(A,B)),\\
\min_{p\in E_{\sigma_B-\sigma_A}^P}(f(t,\sigma_B)(p)-f(t,\sigma_A)(p))&\le\omega(t,\dist_H(A,B))
\end{align*}
is satisfied. 
If $\sigma_A\neq\sigma_B$, this is, according to Proposition \ref{extremal:sets:explained}, ensured by conditions
\eqref{cond1} and \eqref{cond2}, which can be checked by addressing all possible relations $A\subsetneq B$,
$B\subsetneq A$ and $A\not\subset B \wedge B\not\subset A$ between the sets $A$ and $B$ separately.
If $\sigma_A=\sigma_B$, both inequalities are obviously valid.
\end{proof}

\section{Hukuhara-type differentials} \label{sec:second}

In this section, we clarify that curves $A:[0,T]\to\mc{K}_c(\R^d)$, which are 
second type Hukuhara differentiable, are time-reversed Hukuhara differentiable curves
with the same derivative up to sign change.
This insight has some important consequences.
\begin{itemize}
\item [i)] As Hukuhara differentiable curves can only grow
in diameter, see \cite[Proposition 1.6.1]{Lakshmikantham}, second type Hukuhara 
differentiable curves can only shrink in diameter, as claimed in the introduction.
\item [ii)] As the support function representation of Hukuhara differentiable curves is Frechet
differentiable, see \cite[Lemma 4.1]{Banks:Jacobs:70}, this also holds for second type Hukuhara differentiable curves. 
Furthermore, by the same lemma, the Hukuhara and the second type Hukuhara differentials of a curve coincide with its 
Frechet differential (up to a sign change), whenever the Hukuhara type differentials exist.
Therefore, set differential equations based on both types of Hukuhara derivatives are
special cases of the support function approach we presented.
\end{itemize}

The notions of Hukuhara difference and Hukuhara differential are standard.
The concept of generalized or second type Hukuhara differentials goes back to \cite{Bede:Gal:04}.
Their use for set differential equations was investigated in \cite{Malinowski:12a,Malinowski:12}.

\begin{definition} (Hukuhara differences and differentials)
\begin{itemize}
\item [a)] Let $A,B\in\mc{K}_c(\R^d)$. If there exists $C\in\mc{K}_c(\R^d)$ such that
$A=B+C$, then $C$ is called the Hukuhara difference between $A$ and $B$,
and we denote $C=A\ominus_H B$.
\item [b)] A curve $A:[0,T]\to\mc{K}_c(\R^d)$ is called Hukuhara differentiable
at $t\in(0,T)$ with Hukuhara differential $D_HA(t)\in\mc{K}_c(\R^d)$ if the limits
\[\lim_{h\searrow 0}h^{-1}\big(A(t+h)\ominus_H A(t)\big),\quad \lim_{h\searrow 0}h^{-1}\big(A(t)\ominus_H A(t-h)\big)\]
w.r.t.\ Hausdorff distance exist and equal $D_HA(t)$.
\item [c)] A curve $A:[0,T]\to\mc{K}_c(\R^d)$ is called second type Hukuhara differentiable
at $t\in(0,T)$ with differential $D_H^*A(t)\in\mc{K}_c(\R^d)$ if the limits
\[\lim_{h\searrow 0}(-h)^{-1}\big(A(t)\ominus_H A(t+h)\big),\quad \lim_{h\searrow 0}(-h)^{-1}\big(A(t-h)\ominus_H A(t)\big)\]
w.r.t.\ Hausdorff distance exist and equal $D_H^*A(t)$.
\end{itemize}
\end{definition}

The following proposition shows that second type Hukuhara differentiable curves 
are precisely those curves that are Hukuhara differentiable in the ordinary sense
after time reversal.

\begin{proposition} \label{Hukuhara:backwards}
Let $A:[0,T]\to\mc{K}_c(\R^d)$ be a curve, and let $B:[-T,0]\to\mc{K}_c(\R^d)$ be given by $B(t)=A(-t)$.
Then $A$ is second type Hukuhara differentiable at $t\in(0,T)$ if and only if $B$ is Hukuhara differentiable at $-t$ in the usual sense.
In that case, the respective differentials satisfy 
\[D_H^*A(t)=-D_HB(-t).\]
\end{proposition}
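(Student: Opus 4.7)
The plan is to unpack both sides of the claimed equivalence directly from the definitions via the change of variable $s \mapsto -s$, and then match them up term by term. There is no deep obstacle here; the work is really just careful bookkeeping of signs and the behavior of Minkowski scalar multiplication under the Hausdorff metric.

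First, I would write out the defining one-sided limits for $D_HB(-t)$. By definition, Hukuhara differentiability of $B$ at $-t$ requires the two limits
\[\lim_{h\searrow 0}h^{-1}\big(B(-t+h)\ominus_H B(-t)\big),\qquad \lim_{h\searrow 0}h^{-1}\big(B(-t)\ominus_H B(-t-h)\big)\]
to exist in Hausdorff distance and coincide. Substituting $B(s)=A(-s)$, these limits become
\[\lim_{h\searrow 0}h^{-1}\big(A(t-h)\ominus_H A(t)\big),\qquad \lim_{h\searrow 0}h^{-1}\big(A(t)\ominus_H A(t+h)\big).\]

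Next, I would compare these with the limits defining $D_H^*A(t)$, which, read off from the definition, are $\lim_{h\searrow 0}(-h)^{-1}(A(t)\ominus_H A(t+h))$ and $\lim_{h\searrow 0}(-h)^{-1}(A(t-h)\ominus_H A(t))$. The relation $(-h)^{-1}C = (-1)\cdot h^{-1}C$ for $C\in\mc{K}_c(\R^d)$, together with the fact that scalar multiplication by $-1$ is an isometry of $(\mc{K}_c(\R^d),\dist_H)$, shows that the two pairs of limits exist simultaneously, and when they exist the $D_H^*A(t)$-limits equal $-1$ times the $D_HB(-t)$-limits. In particular, the single pair condition (both one-sided limits exist and agree) holds for $A$ at $t$ in the second-type sense if and only if it holds for $B$ at $-t$ in the usual sense, and in that case $D_H^*A(t)=-D_HB(-t)$.

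The only potentially fiddly point to spell out is why $(-1)\cdot(\cdot)$ commutes with Hausdorff limits, which is immediate from $\dist_H(-C,-D)=\dist_H(C,D)$ for $C,D\in\mc{K}_c(\R^d)$; everything else is a mechanical substitution. I would conclude with a one-line remark that the \emph{if and only if} statement about differentiability follows because the existence of the paired limits on one side is equivalent, term by term, to the existence on the other.
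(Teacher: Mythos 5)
Your proof is correct and follows essentially the same route as the paper's: the paper simply records the two identities obtained from the substitution $B(s)=A(-s)$ and the factoring out of $-1$, exactly as you do. Your additional remark that multiplication by $-1$ is a Hausdorff isometry (so it commutes with the limits) is a detail the paper leaves implicit, but it is the same argument.
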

\begin{proof}
The statement follows immediately from the identities
\begin{align*}
\lim_{h\searrow 0}(-h)^{-1}\big(A(t)\ominus_H A(t+h)\big) &= -\lim_{h\searrow 0}h^{-1}\big(B(-t)\ominus_H(B(-t-h))\big),\\
\lim_{h\searrow 0}(-h)^{-1}\big(A(t-h)\ominus_H A(t)\big) &= -\lim_{h\searrow 0}h^{-1}\big(B(-t+h)\ominus_H B(-t)\big)
\end{align*}
for the Hausdorff limits.
\end{proof}

\begin{figure}
\begin{center}
\includegraphics[scale=0.75]{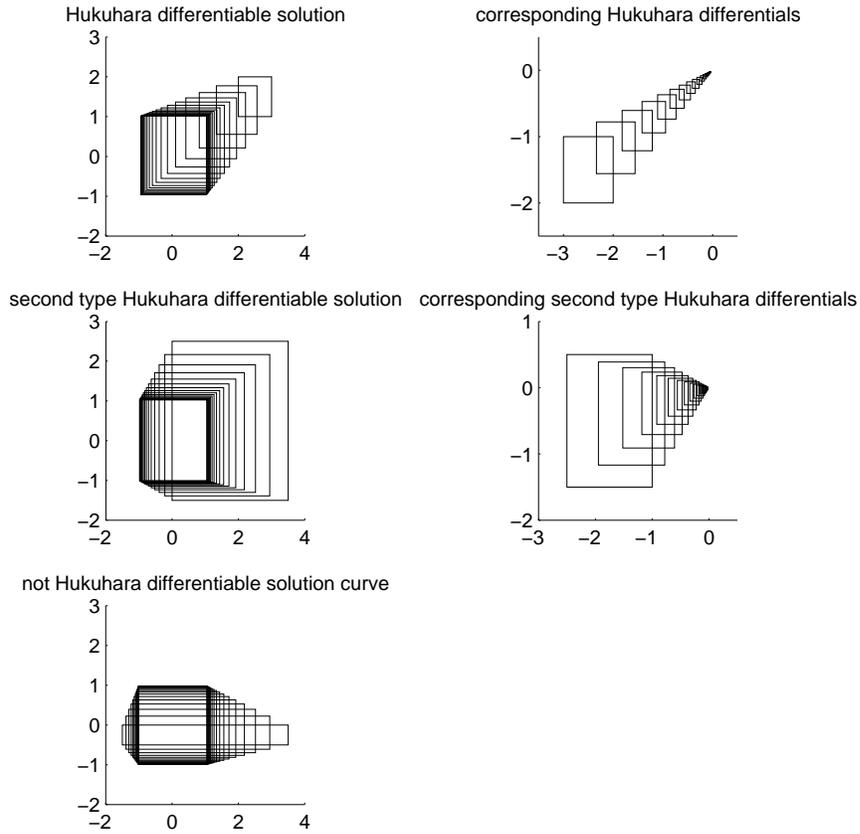}
\end{center}
\caption{Solutions to set differential equation \eqref{example:ode} with three different initial values. 
The rectangles in the frames on the left are the values $A(t)$, $t=0,\tfrac14,\tfrac12,\tfrac34,\ldots$, of the solutions.
The rectangles in the top right frame are the Hukuhara differentials $D_HA(t)$, and the rectangles in the second frame on the 
right are the second type Hukuhara differentials $D_H^*A(t)$ at the same time points.
The bottom right frame is empty, because the third solution curve is neither Hukuhara nor second type Hukuhara differentible.
\label{fig:set}}
\end{figure}

\begin{figure}
\begin{center}
\includegraphics[scale=0.75]{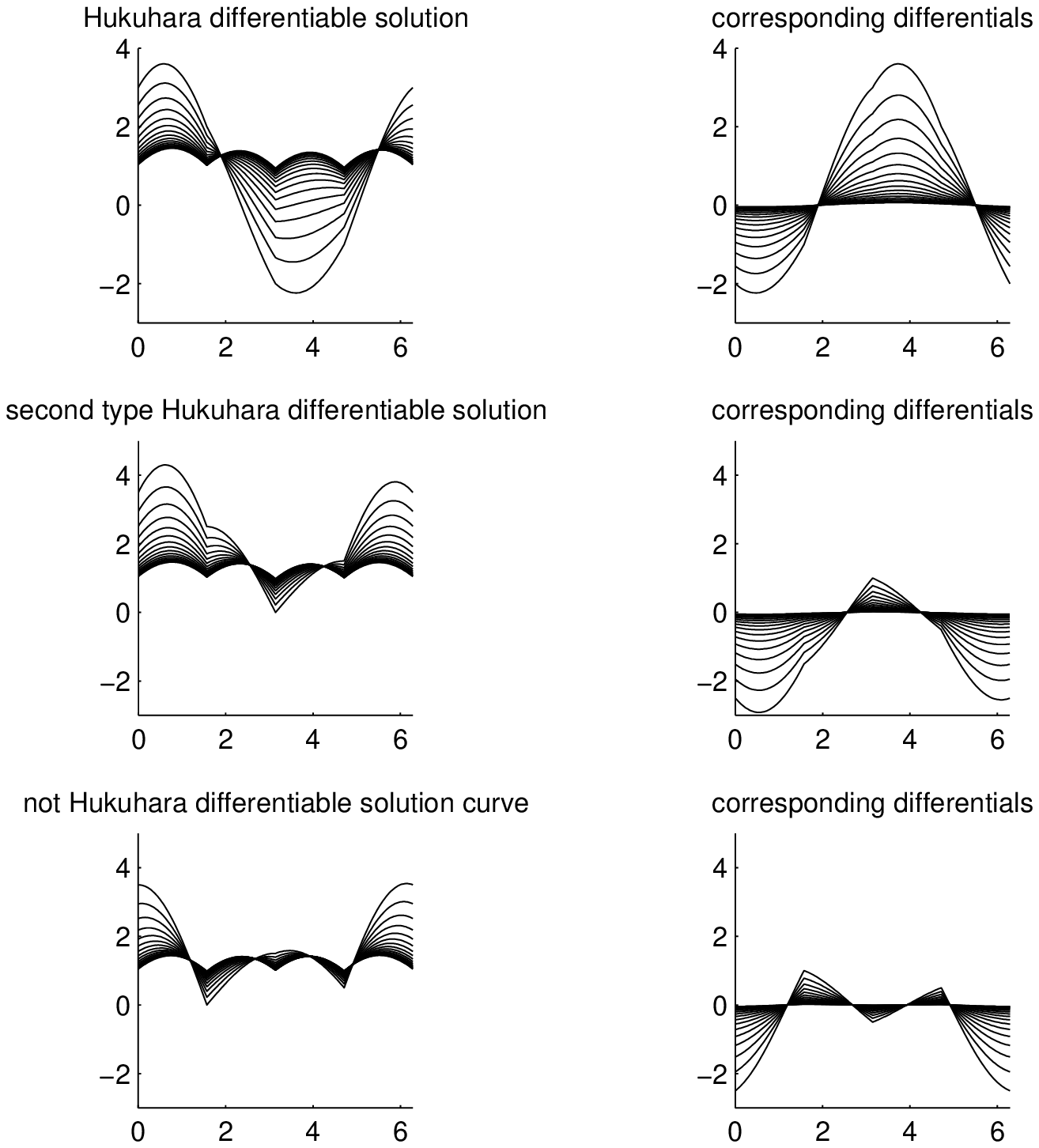}
\end{center}
\caption{Solutions from Figure \ref{fig:set} and the corresponding differentials 
in support function representation.
The differentials of the third curve cannot be interpreted as sets, but are well-defined as elements of $\hat\Sigma(\R^d)$.\label{fig:sup}}
\end{figure}

\section{Example} \label{counterexample}
We conclude our paper with a simple, but instructive example, which illustrates that the usefulness 
of both types of Hukuhara derivative depends not only on the equation, but even on the initial value.
Consider the set differential equation
\begin{equation} \label{example:ode}
\tfrac{d}{dt}\sigma_{A(t)}=\sigma_Q-\sigma_{A(t)},\quad \sigma_{A(0)}=\sigma_{A_0}
\end{equation}
in $\mc{K}_c(\R^2)$ with $Q=[-1,1]^2$ and $A_0=[a_1,b_1]\times[a_2,b_2]\subset\R^2$.
The curve
\begin{equation} \label{sol}
A(t)=e^{-t}A_0+(1-e^{-t})Q
\end{equation}
is a solution of \eqref{example:ode}, because
\[\tfrac{d}{dt}\sigma_{A(t)}=e^{-t}(\sigma_{Q}-\sigma_{A_0})=\sigma_Q-\sigma_{A(t)}.\]
By Theorem \ref{picard}, the solution is unique.
Clearly, the set $Q$ is a globally asymptotically stable fixed point.

By \cite[Lemma 4.1]{Banks:Jacobs:70}, any Hukuhara differentiable solution of the reformulation
\begin{equation} \label{Hukuhara:equation}
D_HA(t)=Q\ominus_H A(t),\quad A(0)=A_0 
\end{equation}
of \eqref{example:ode} in set notation must coincide with this curve.
Note that for many $A\in\mc{K}_c(\R^2)$, the right-hand side $Q\ominus_H A$ of \eqref{Hukuhara:equation} is not well-defined.
Since $\sigma_{Q}-\sigma_{A_0}\in\Sigma(\R^d)$ if and only if 
\begin{equation} \label{cond:ex}
\max\{b_1-a_1,b_2-a_2\}\le 2,
\end{equation}
there does not exist a Hukuhara differentiable solution if this condition is violated.
A computation shows that \eqref{cond:ex} is sufficient for \eqref{sol} being a solution
of first Hukuhara type.

Proposition \ref{Hukuhara:backwards}, however, shows that the curve \eqref{sol}
can only be a second type Hukuhara solution, if $\sigma_{Q}-\sigma_{A_0}\in-\Sigma(\R^d)$,
which is equivalent with 
\begin{equation} \label{cond:ex:2}
\min\{b_1-a_1,b_2-a_2\}\ge 2,
\end{equation}
and condition \eqref{cond:ex:2}
is sufficient for \eqref{sol} being a solution of second Hukuhara type.

\medskip

Figures \ref{fig:set} and \ref{fig:sup} display solutions of \eqref{example:ode} with three different
initial values $A_0^1=[2,3]\times[1,2]$, $A_0^2=[0,3.5]\times[-1.5,2.5]$ and $A_0^3=[-1.5,3.5]\times[-0.5,0]$.

Figure \ref{fig:set} depicts the sets as such on the left. 
It is clearly visible that $\dist_H(A(t),Q)\to 0$ as $t\rightarrow\infty$.
The first curve is Hukuhara, but not second type Hukuhara differentiable, and the Hukuhara differentials 
are plotted in the top right subplot.
The second curve is second type Hukuhara, but not Hukuhara differentiable, and the second type Hukuhara
differentials are plotted in the middle of the right column.
In both cases, the differentials converge to $\{0\}$ when the state approaches $Q$.
The third curve is neither Hukuhara nor second type Hukuhara differentiable, because it shrinks in 
the direction of the first and grows in the direction of the second axis.

Figure \ref{fig:sup} depicts the same three curves in support function representation.
The left column shows the evolution of the support functions, while the right column shows 
the Frechet differentials along that curve.
In this representation, the third curve can be treated as any other.
The fact, that its differentials are elements of $\hat\Sigma(\R^d)\setminus\Sigma(\R^d)$
causes no problems.
In all three cases, the derivatives converge to the zero function as the state approaches the equilibrium.

\medskip

We conclude that both types of Hukuhara differentiability only yield solutions for very 
special initial conditions, while the support function approach yields a solution that exhibits 
the expected behavior for any initial condition without any technical complications.

\bibliographystyle{plain}
\bibliography{sode}

\end{document}